\newtheorem{thm}{Theorem}[section]
\newtheorem{lem}{Lemma}[section]
\newtheorem{rem}{Remark}[section]
\newtheorem{exm}{Example}[section]
\date{}
\begin{document}

\title{Domain Decomposition Preconditioners for a Discontinuous Galerkin Formulation of a Multiscale Elliptic Problem}
%\titlerunning{Domain Decomposition for a Multiscale Elliptic Problem}

\author{Yunfei Ma, Petter Bj{\o}rstad, Talal Rahman, and Xuejun Xu }
%\authorrunning{}

%\institute{Yunfei Ma $\cdot$ Xuejun Xu\at
%              Institute of Computational Mathematics and Scientific/Engineering
%Computing, Academy of Mathematics and Systems Science, Chinese
%Academy of Sciences, Beijing 100190, China \\
%             % Tel.: +123-45-678910\\
%             % Fax: +123-45-678910\\
%              \email{mayf@lsec.cc.ac.cn, \,\,xxj@lsec.cc.ac.cn}           %  \\
%%             \emph{Present address:} of F. Author  %  if needed
%           \and
%           Petter Bj{\o}rstad  \at
%           Department of Informatics, University of Bergen, Thorm{\o}hlensgate 55, 5008 Bergen, Norway.\\
%\email{Petter.Bjorstad@ii.uib.no}
%\and
% Talal Rahman \at
% Faculty of Engineering, Bergen University College,
%Nyg{\aa}rdsgaten 112, 5020 Bergen, Norway.\\
%\email{Talal.Rahman@hib.no}
%}
%\date{Received: date / Accepted: date}

\maketitle

\begin{abstract} In this paper, we propose a domain decomposition method for
multiscale second order elliptic partial differential equations with highly varying coefficients.
The method is based on a discontinuous Galerkin formulation.
We present both a nonoverlapping and an overlapping version of the method.
We prove that the condition number bound of the preconditioned algebraic system
in either case can be made independent of the coefficients under certain assumptions. Also, in our analysis, we do not need to assume that the coefficients are continuous across the coarse grid boundaries. The analysis and the condition number
bounds are new, and contribute towards further extension of the theory for the discontinuous
Galerkin discretization for multiscale problems.
\end{abstract}

\section{ Introduction}
Subsurface flows in heterogenous media \cite{I.P.,I.P.b}, that is, where
the heterogeneity varies over a wide range of scales, are examples of multiscale problems.
Numerical solutions of such problems are often affected by the heterogeneity,
in particular when it is highly varying, that is, the permeability of the media can span a large scale.
In this paper we consider the numerical solution of flow problems
governed by elliptic equations with highly varying coefficients.
Domain decomposition methods use both fine scale and coarse scale
subproblems as well as interpolation operators from the subspaces to the
solution space to construct preconditioners for the original problem.
The convergence property is linked to proper treatment of the jumps in the coefficients \cite{I.P.R.}.
A key point when building such domain decomposition methods is to find
a good coarse problem that can capture relevant small scale information from the fine level.
In general,
due to the complex geometry of the conductivity field, where high and low conductivity regions often appear as
small inclusions inside subdomains and long channels across
subdomain boundaries, it is rather difficult to design a robust
domain decomposition method for such problems.\\
\indent\,When considering high conductivity regions (inclusions or channels) in
a low conductivity background, the authors of \cite{I.P.R.} used
functions that are discrete harmonic in each coarse grid block with
special boundary values as coarse basis functions (multiscale basis functions).
They introduced two indicators, $\pi(\alpha)$ and $\gamma(\alpha)$, where $\alpha$ is the coefficient representing the conductivity, to capture the effect of the jumps in $\alpha$ on the condition number.
The indicator $\pi(\alpha)$ measures how well the coarse partitioning of the whole domain $\Omega$ is,
as for instance, $\pi(\alpha)$ behaves well as long as the high conductivity regions (inclusions) do not cross
any subdomain boundaries, and badly otherwise.
The indicator $\gamma(\alpha)$ is a measure for the weighted energy norm of the coarse basis functions, and it depends on the choice of boundary conditions, as for instance,
an oscillatory  boundary condition is often needed in order to keep the energy norm low, consequently for
$\gamma(\alpha)$ to behave well.
  The authors in \cite{I.P.R.} used the traditional $L^2$ projection operator from the fine space to the coarse space,
and proved that the condition number bound is independent of the jumps.
This approach assumes that the conductivity coefficient is continuous
across coarse grid boundaries, although their numerical results did not seem to require this.
Recently, in \cite{G.E.}, the authors proposed an overlapping domain decomposition method for the multiscale problem.
The idea of their method is based on the fact that high conductivity
regions correspond to the smallest eigenvalues of the system.
Consequently, they used the corresponding eigenfunctions as coarse basis functions,
and proved a weighted Poincar\'{e} inequality resulting in a condition number bound independent of the jumps.
 They also proposed an overlapping domain decomposition methods for the Schur complement system.\\
\indent Discontinuous Galerkin methods may offer several important and valuable computational advantages
over their conforming Galerkin counterparts.
The finite element spaces are not subject to inter-element continuity conditions,
and the local element spaces can be defined independently.
This makes discontinuous Galerkin methods well suited for their applications to multiscale problems
with piecewise constant coefficients relative to a fine triangulation.
A domain decomposition method for the discontinuous Galerkin formulation of
a multiscale elliptic problem has recently been proposed in \cite{D.S.}. There a composite discontinuous Galerkin formulation,
that is, a regular continuous formulation inside each subdomain and a discontinuous Galerkin formulation
across the subdomain boundaries, has been used.
There the coarse space consists of piecewise constant basis functions over the coarse partition.
The condition number bound of this method is shown to be
\begin{align}
\max_i\max_j\frac{\overline{\alpha}_i}{\underline{\alpha}_i}\frac{H_i}{h_ih_{ij}}, \label{eq:cond1}
\end{align}
 where $\overline{\alpha}_i$ and $\underline{\alpha}_i$ represent the
maximum and the minimum of the coefficients inside a boundary layer of the subdomain $\Omega_i,$ $H_i$ is the diameter of $\Omega_i,$ $h_i$ is
the fine mesh size in $\Omega_i,$ and $h_{ij}=2h_ih_j/(h_i+h_j)$, the harmonic average of $h_i$ and $h_j$.
We note that the condition number bound above depends
on the jump of the coefficients inside the boundary layer of each subdomain.  \\
\indent The present work is an extension of the work in \cite{I.P.R.} to a discontinuous Galerkin formulation.
We use the same bilinear form as the one given in \cite{Z.X.S.} with harmonic average weight
functions, defined on the fine space which is the space of piecewise linear polynomials with respect to the fine triangulation.
A composite discontinuous Galerkin formulation, c.f., \cite{D.S.}, is used on the coarse space
whose basis functions are the multiscale basis functions given
by the oscillatory boundary conditions, c.f., \cite{I.P.R.}.
We present both a nonoverlapping and an overlapping method.\\
\indent A new indicator $\beta(\alpha)$ is introduced measuring the $L^2$ norm of the jump
of the coarse basis functions across coarse grid boundaries (see (\ref{eq:beta})).
We show that, under certain assumptions, slightly weaker than in \cite{I.P.R.}, our methods are robust.
For the nonoverlapping case, we show that the condition number bound is
\begin{align}
\eta\max_{\partial K_{mk}\atop m>k}\max_{e_{jl}\subset \partial K_{mk}\atop j>l}W_e\gamma(1)\frac{H}{h}+\max\big{(}\gamma(\alpha), \eta\beta(\alpha)\big{)}, \label{eq:cond2}
\end{align}

where $W_e$ is the harmonic average weight function (see (\ref{eq:weight})), $\gamma(\alpha)$ and $\beta(\alpha)$ are the
indicators, and $\eta$ is a penalty parameter.
The numerical experiments support the assumption that this bound is sharp.
However, due to the presence of $W_e$ in the bound, the method may
have a large condition number when high conductivity channels cross subdomain boundaries.
Note that, this is the case for (\ref{eq:cond1}) even when there are no channels crossing subdomain boundaries.
The situation gets improved in the overlapping case.
We show that, for the overlapping method, the condition number bound is
\begin{align}
\pi(\alpha)\max\big{(}\gamma(1),\beta(1)\big{)}\max_i\frac{H_i}{\delta_i}+\max\big{(}\gamma(\alpha), \eta\beta(\alpha)\big{)}. \label{eq:cond3}
\end{align}
\indent The rest of the paper is organized as the following. In Section 2, we introduce our model problem and the discontinuous Galerkin discrete formulation.
The two level additive Schwarz domain decomposition methods, first the
nonoverlapping and then the overlapping version, are defined and analyzed in Section 3.
Section 4 is devoted to the numerical results.

\section{ Problem Setting}
 \setcounter{equation}{0}
 Throughout this paper, we
adopt standard notations from the Lebesgue and Sobolev space theory
(see \cite{Adams}).
We further use $A
\lesssim B$ to denote $A \leq CB$ with a positive constant $C$ depending
only on the shape regularity of the meshes, and $A \approx B$
to denote $A \lesssim B \lesssim A$.

\indent Consider the self adjoint elliptic problem on a polygonal domain $\Omega$ with boundary $\partial\Omega$,
\addtocounter{equation}{0}
\begin{equation}
\-{L}(u)=-\nabla\cdot(\alpha(x)\nabla u)=f,~~~x\in\Omega,
\label{eq:eq}
\end{equation}
\begin{equation}
 u=g,~~~ x \in \partial\Omega,
\label{eq:bc}
\end{equation}
where the coefficient $\alpha(x)\in L^{\infty}(\Omega)$ with $\alpha(x)\geq \alpha_0>0,$ representing the conductivity field on $\Omega,$ is
piecewise constant on the fine mesh $\mathcal{T}_h,$
$\alpha(x)$ may have (discontinuous) jumps across the elements. Also we let $f\in L^2(\Omega).$\\
 \indent\,\,First, we introduce some notations.
   $\mathcal{T}_h$ denotes a fine triangulation of the whole domain $\Omega$ which is
quasi-uniform, $\mathcal{T}_h=\bigcup_{\tau\in \Omega}\tau, $ where $\tau$ represents a
small triangle in $\mathcal{T}_h$ and $h$ is the mesh size.
  $\mathcal{T}_H$ denotes a coarse triangulation which we get by partitioning $\Omega$ into triangular substructures $\mathcal{T}_H$,
 which is also quasi-uniform with the mesh size $H$.
We assume that the boundary edge of each coarse grid element in $\mathcal{T}_H,$ is aligned
with the edges of the elements in the fine triangulation $\mathcal{T}_h$.
 We denote the elements of the coarse and the fine triangulation by $K$, and $\tau$, respectively.
$e$ denotes an edge of a fine element $\tau$, and $\mathcal{E}$ is the union of all
edges in $\Omega$, $\mathcal{E}=\bigcup_{e\subset \partial\tau}e.$
Additionally, $\mathcal{E}=\mathcal{E}_I\bigcup\mathcal{E}_D$, where $\mathcal{E}_I:=\bigcup_{j>l}e_{jl}$ refers to all the inner edges
and $\mathcal{E}_D:=\bigcup_{e\subset \partial \Omega}e$ refers to all the edges touching $\partial\Omega.$
Given a coarse triangulation $\mathcal{T}_H=\{K_i\}_{i=1}^N,$ we let
$\partial K_i$ be the boundary of the element $K_i$, and $\partial K_{ij}=\partial K_i\cap \partial K_j$
be the open edge shared by the elements $K_i$ and $K_j$.

 \indent\,\,Next, we introduce two weight functions related to each fine edge $e\in \mathcal{E}_I$.
We first denote the two fine elements sharing an edge $e$ by $\tau_+^e$ and $\tau_-^e$, and
denote the coefficients of the two elements by $\alpha_+^e$ and $\alpha_-^e$ respectively.
The weight functions $w_+^e$ and $w_-^e,$ associated with the edge $e,$ are defined as
$$w_+^e+w_-^e=1,$$ where $w_+^e=\alpha_-^e/(\alpha_+^e+\alpha_-^e)$ and $w_-^e=\alpha_+^e/(\alpha_+^e+\alpha_-^e).$
The harmonic average weight function, associated with the edge $e$, is then
\begin{align}
 W_e=w_+^e\alpha_+^e+w_-^e\alpha_-^e=\frac{2\alpha_-^e\alpha_+^e}{\alpha_-^e+\alpha_+^e}. \label{eq:weight}
\end{align}
The following inequalities hold,
\begin{align}
\frac{(w_+^e)^2\alpha_+^e}{W_e}=
\frac{(w_+^e)^2\alpha_+^e}{w_+^e\alpha_+^e+w_-^e\alpha_-^e}\leq\frac{(w
_+^e)^2\alpha_+^e}{w_+^e\alpha_+^e}
\leq w_+^e\leq 1 , \label{eq:weight-ineq}
\end{align}
and
$$\frac{(w_-^e)^2\alpha_-^e}{W_e}\leq 1.$$
\indent Now, let $\alpha_{min}=\min(\alpha_-^e, \alpha_+^e)$ and $\alpha_{max}=\max(\alpha_-^e, \alpha_+^e)$, it then follows immediately that
$\alpha_{min}\leq W_e \leq 2\alpha_{min}.$
In the above definitions of $W_e$ and $w^e$, for $e\in\mathcal{E}_D,$
we set $w_e=1$ and $W_e=\alpha^e.$\\
\indent Let the jump across an edge $e$ be
\begin{align}
[v]=
\begin{cases}v_+^e\mathbf{n}_+^e+v_-^e\mathbf{n}_-^e,\quad \ \ &
e\in\mathcal{E}_I,\\
v_e\mathbf{n}_e,\quad \ \ & e\in\mathcal{E}_D,\\
\end{cases}
\label{eq:jump}
\end{align}
where $\mathbf{n}_+^e$ and $\mathbf{n}_-^e$ denote the unit outward normal
vectors of $\tau_+^e$ and $\tau_-^e,$ respectively.
%In order to keep things simple, we will drop the (explicit)
%subscript $e$ when no ambiguity can arise.
The weighted average $\{v\}_w^e$ is defined similarly by
\begin{align}
\{v\}_w^e=
\begin{cases}w_+^ev_+^e+w_-^ev_-^e,\quad \ \ &
e\in\mathcal{E}_I,\\
w_ev_e,\quad \ \ & e\in\mathcal{E}_D.\\
\end{cases}
\label{eq:average}
\end{align}

For the ease of our presentation, we define all our norms and function spaces here.

Given a domain $D\subset \Omega,$
we define the standard $L^\infty$ norm on $D$ as
$$||u||_{0,\infty,D}^2:={\rm ess}\,\, {\rm sup}\{|u(x)|:x\in D\},$$
and the standard $L^2$ norm on $D$ as $$||u||_{0,D}^2:=\int_D u^2dx.$$
A weighted $H^1$
seminorm on $D$ is defined by $$|u|_{1,\alpha,D}^2:=\int_D\alpha\nabla u\cdot\nabla u \, dx,$$ where $\alpha(x)>0,\,\,\forall x\in D.$
With $\alpha(x)=1,\,\,\forall x\in D,$ we get the standard $H^1$ seminorm $$|u|_{1,D}^2:=\int_D\nabla u\cdot\nabla u \, dx.$$
A weighted norm, based on the
discontinuous Galerkin formulation, on $D$, is defined by $$\|u\|_{1,h,\alpha, D}^2:=\sum_{\tau\subset D}\int_{\tau}\alpha\nabla u\cdot
\nabla u dx+\sum_{e_{jl}\subset D \atop j>l}
\frac{W_e}{h_e}\|[u]\|_{0,e}^2+\sum_{e\subset \partial D}
\frac{W_e}{h_e}\|u_e\|_{0,e}^2,$$ where $\bigcup_{j>l}e_{jl}$ refers to all the fine element edges in the interior of $D.$ We use $$|u|_{1,h,\alpha, D}^2:=\sum_{\tau\subset D}\int_{\tau}\alpha\nabla u\cdot
\nabla u dx+\sum_{e_{jl}\subset D \atop j>l}
\frac{W_e}{h_e}\|[u]\|_{0,e}^2$$ to denote the corresponding semi norm on $D$. If $\alpha(x)=1,\,\,\forall x\in D,$ we get the following norm
$$\|u\|_{1,h,D}^2:=\sum_{\tau\subset D}\int_{\tau}\nabla u\cdot
\nabla u dx+\sum_{e_{jl}\subset D \atop j>l}
\frac{1}{h_e}\|[u]\|_{0,e}^2+\sum_{e\subset \partial D}
\frac{1}{h_e}\|u_e\|_{0,e}^2,$$ and seminorm
$$|u|_{1,h, D}^2:=\sum_{\tau\subset D}\int_{\tau}\nabla u\cdot
\nabla u dx+\sum_{e_{jl}\subset D \atop j>l}
\frac{1}{h_e}\|[u]\|_{0,e}^2.$$
Let $H^s(\mathcal{T}_h )$ be a broken Sobolev space of degree $s>0$ defined as
$$H^s(\mathcal{T}_h)=\{v\in L^2(\Omega):v|_{\tau}\in H^s(\tau),\,\,\forall \tau\in\mathcal{T}_h\},$$
and $V^s(\mathcal{T}_h)$ be its subspace defined as
$$V^s(\mathcal{T}_h)=\{v\in H^s(\mathcal{T}_h):\nabla\cdot(\nabla v)\in L^2(\tau),\,\,\forall \tau\in\mathcal{T}_h\}.$$

With the above preparation, we can now define our discontinuous Galerkin bilinear form $a(u,v)$ and the right hand side $f(v)$ for the continuous problem (\ref{eq:eq}) and (\ref{eq:bc}). They are defined as follows, c.f., \cite{Z.X.S.}. For $u, v\in V^{1+\epsilon}(\mathcal{T}_h)$,
\begin{align}
\begin{split}
a(u,v)=&\sum_{\tau}\int_{\tau}\alpha\nabla u\cdot \nabla vdx-\sum_{e_{jl}\subset \Omega\atop j>l}\int_{e_{jl}}\{\alpha
\nabla u\}_w^e[v]ds-\sum_{e_{jl}\subset \Omega\atop j>l}\int_{e_{jl}}\{\alpha \nabla
v\}_w^e[u]ds\\&+\eta\sum_{e_{jl}\subset \Omega\atop j>l} \frac{W_e}{h_e}\int_{e_{jl}} [u][v]ds-\sum_{e\subset \partial\Omega}\int_ew_e\alpha_e
\nabla u_e\cdot v_e\mathbf{n}_eds\\&-\sum_{e\subset \partial\Omega}\int_{e}w_e\alpha_e \nabla
v_e\cdot u_e\mathbf{n}_eds+\eta\sum_{e\subset \partial\Omega} \frac{W_e}{h_e}\int_e u_ev_eds,
\label{eq:bf}
\end{split}
\end{align}
and
\begin{align*}
f(v)=\sum_{\tau}\int_{\tau}fvdx-\sum_{e\subset \partial\Omega}\frac{W_e}{h_e}\int_eg(\alpha\nabla v\cdot\mathbf{n}_e)ds+\eta\sum_{e\subset \partial\Omega}\frac{W_e}{h_e}\int_egvds,
\end{align*}
where $\cup_{j>l}e_{jl}$ refers to all the fine element edges in the interior of $\Omega,$ $W_e$ is the harmonic weight function defined in (\ref{eq:weight}),
and $\eta$ is a penalty parameter.\\
\indent Here and in the text below, for simplicity, we use $W_{e},$ $h_e$ and $\{\cdot\}_w^e$ instead of $W_{e_{jl}},$ $h_{e_{jl}}$ and  $\{\cdot\}_w^{e_{jl}}$ respectively if there is
no confusion. Moreover, as stated in \cite{Z.X.S.}, for $f\in L^2(\Omega),$ the flux $\mathbf{\sigma}=-\alpha\nabla u$ is in $H({\rm div}; \Omega)\cap H^\epsilon(\mathcal{T}_h)^2$,
where $H({\rm div};\Omega)=\{\mathbf{v}\in L^2(\Omega)^2:\nabla\cdot\mathbf{v}\in L^2(\Omega)\}$ is an Hilbert space equipped with the norm
$||\mathbf{v}||_{H_{{\rm div};\Omega}}=(||\mathbf{v}||_{0,\Omega}^2+||\nabla\cdot \mathbf{v}||_{0,\Omega}^2)^{1/2}.$
The integrations on the fine edge $e_{jl}$ in (\ref{eq:bf}) above can be understood in the weak sense, see \cite{Z.X.S.} for details.

Due to \cite{Z.X.S.}, the the weak solution of $(2.1)$ and $(2.2),$ $u\in H^{1+s}(\Omega)\cap V^{1+\epsilon}(\mathcal{T}_h),$ $0<\epsilon\leq s\leq1,$
satisfies the following variational equation
\begin{align}
a(u,v)=f(v),\,\,\forall v\in V^{1+\epsilon}(\mathcal{T}_h).
\label{eq:cp}
\end{align}
\indent Next, we define the finite element space $V^h$ associated with $\mathcal{T}_h$ as follows.
For any fine triangle $\tau$ let $P_{1}(\tau)$ denote the set of all
linear polynomials on $\tau$. The finite element space $V^h$
is then defined as
$$V^h = \{v: v|_{\tau} \in P_1(\tau), \, \forall \tau \in \mathcal{T}_h \}. $$
The bilinear form on the finite element space $V^h$ is defined as $a_h(u,v)=a(u,v),$ $\forall u,v\in V^h.$

We can now formulate our discrete problem: Find $u_h \in V^h$ such that
\begin{equation}
 a_{h}(u_h,v_h) = f(v_h),\,\,\forall v_h \in V^h.    \label{eq:dp}
\end{equation}

Naturally, the above bilinear form induces a norm in the space $V_h$, which is
$$\|v\|_{1,h,\alpha,\Omega}^2=\sum_{\tau\subset\Omega}\int_{\tau}\alpha\nabla v\cdot
\nabla vdx+\sum_{e_{jl}\in\subset\Omega\atop j>l}
\frac{W_e}{h_e}\|[v]\|_{0,e}^2+\sum_{e\subset \partial\Omega}
\frac{W_e}{h_e}\|v_e\|_{0,e}^2.$$

\indent The next lemma gives us the continuity and coercivity of the bilinear form $a_{h}(\cdot,\cdot).$
\begin{lem} There exists a constant $C,$ such that \\
{\rm (i)} \hspace*{3.0cm} $|a_{h}(u,v)|\leq C||u||_{1,h,\alpha,\Omega}^2||v||_{1,h,\alpha,\Omega}^2, \,\,\forall u,v\in V^h.$ \\[\baselineskip]
{\rm (ii)}There exists positive constant $\eta_0>0,$ such that for all $\eta>\eta_0,$
$$a_{h}(u,u)\geq C(\eta)||u||_{1,h,\alpha,\Omega}^2, \,\,\forall u\in V^h, $$ where $C(\eta)$ is independent of the jump in the coefficient.
\label{lemma:coer}
\end{lem}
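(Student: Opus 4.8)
The plan is to prove both statements with the standard toolkit of discontinuous Galerkin analysis: the Cauchy--Schwarz inequality applied edgewise, a discrete trace (inverse) inequality converting a normal flux on an edge into a volume gradient on the adjacent triangle, and Young's inequality to rebalance the mixed terms. The one ingredient that keeps all constants independent of the jumps in $\alpha$ is the pair of weight estimates recorded in (\ref{eq:weight-ineq}); everything else is routine. (I read the continuity bound in (i) as $|a_h(u,v)| \le C\|u\|_{1,h,\alpha,\Omega}\|v\|_{1,h,\alpha,\Omega}$, the squared exponents on the two factors being a typo.) The trace inequality I would invoke is: for $v\in P_1(\tau)$ and an edge $e\subset\partial\tau$, $\|\nabla v\cdot\mathbf{n}\|_{0,e}^2 \lesssim h_e^{-1}\|\nabla v\|_{0,\tau}^2$, with a constant depending only on shape regularity.

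For continuity (i), I would split $a_h(u,v)$ into the three families of terms in (\ref{eq:bf}). The volume term and the penalty term are bounded directly by Cauchy--Schwarz against the matching pieces of the two norms. For a consistency term $\int_e\{\alpha\nabla u\}_w^e[v]\,ds$, Cauchy--Schwarz gives $\|\{\alpha\nabla u\}_w^e\|_{0,e}\,\|[v]\|_{0,e}$; expanding the weighted average and applying the trace inequality yields $\|\{\alpha\nabla u\}_w^e\|_{0,e}^2 \lesssim h_e^{-1}\big((w_+^e)^2(\alpha_+^e)^2\|\nabla u_+\|_{0,\tau_+}^2 + (w_-^e)^2(\alpha_-^e)^2\|\nabla u_-\|_{0,\tau_-}^2\big)$. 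Here I would use $(w_\pm^e)^2(\alpha_\pm^e)^2 = \big((w_\pm^e)^2\alpha_\pm^e/W_e\big)\,W_e\,\alpha_\pm^e \le W_e\,\alpha_\pm^e$ from (\ref{eq:weight-ineq}), so that $\|\{\alpha\nabla u\}_w^e\|_{0,e} \lesssim (W_e/h_e)^{1/2}\big(\alpha_+^e\|\nabla u_+\|_{0,\tau_+}^2+\alpha_-^e\|\nabla u_-\|_{0,\tau_-}^2\big)^{1/2}$. Multiplying by $\|[v]\|_{0,e}$, which is controlled by the $(W_e/h_e)^{1/2}\|[v]\|_{0,e}$ piece in the norm of $v$, and summing over edges (each triangle is touched a bounded number of times) closes (i). The boundary terms, where $w_e=1$ and $W_e=\alpha^e$, are handled identically since only one triangle abuts each boundary edge.

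For coercivity (ii), I would set $v=u$ in (\ref{eq:bf}) to obtain $a_h(u,u) = \sum_\tau\int_\tau\alpha|\nabla u|^2 + \eta\sum_e (W_e/h_e)\|[u]\|_{0,e}^2 - 2\sum_e\int_e\{\alpha\nabla u\}_w^e[u]\,ds + (\text{boundary})$. The mixed term is estimated exactly as above, now followed by Young's inequality with a free parameter $\epsilon$: $2\big|\int_e\{\alpha\nabla u\}_w^e[u]\big| \le C\epsilon\big(\alpha_+^e\|\nabla u_+\|_{0,\tau_+}^2+\alpha_-^e\|\nabla u_-\|_{0,\tau_-}^2\big) + (C/\epsilon)(W_e/h_e)\|[u]\|_{0,e}^2$. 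Summing gives $a_h(u,u) \ge (1-C\epsilon)\sum_\tau\int_\tau\alpha|\nabla u|^2 + (\eta-C/\epsilon)\sum_e(W_e/h_e)\|[u]\|_{0,e}^2 + (\text{boundary})$. Fixing $\epsilon$ so that $1-C\epsilon>0$ determines a threshold $\eta_0 := C/\epsilon$; for every $\eta>\eta_0$ both prefactors are positive, and because the weight estimate has already removed all dependence on $\alpha$, the resulting constant $C(\eta)$ is independent of the jumps, as claimed.

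The only genuinely delicate point is the weight bookkeeping. A naive arithmetic average would leave factors of order $\alpha_{\max}/\alpha_{\min}$ in front of the gradient norms, destroying robustness. The estimate $(w_\pm^e)^2\alpha_\pm^e/W_e \le 1$ in (\ref{eq:weight-ineq}) is precisely what converts the troublesome $(\alpha_\pm^e)^2$ into the benign product $W_e\,\alpha_\pm^e$, and this is the entire reason for choosing the harmonic-average weights. Everything else, including counting the edges incident to each triangle and the final selection of $\eta_0$, is mechanical.
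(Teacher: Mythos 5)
Your proof is correct, but note that the paper itself does not prove Lemma \ref{lemma:coer} at all: it defers entirely to the reference \cite{Z.X.S.}, where the bilinear form (\ref{eq:bf}) with harmonic-average weights was introduced and both continuity and coercivity (plus the error estimate) were established. What you have written is, in effect, a self-contained reconstruction of that cited argument: edgewise Cauchy--Schwarz, the inverse/trace inequality for piecewise linears, Young's inequality with a free parameter to set the threshold $\eta_0$, and --- the one non-routine ingredient --- the weight manipulation $(w_\pm^e)^2(\alpha_\pm^e)^2 = \big((w_\pm^e)^2\alpha_\pm^e/W_e\big)\,W_e\,\alpha_\pm^e \le W_e\,\alpha_\pm^e$ coming from (\ref{eq:weight-ineq}), which is exactly what makes every constant independent of the jumps in $\alpha$ and is the reason the harmonic weights are used in the first place. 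Your reading of the squared norms in (i) as a typo is also right: continuity of a bilinear form must read $|a_h(u,v)|\le C\|u\|_{1,h,\alpha,\Omega}\|v\|_{1,h,\alpha,\Omega}$, since the squared version fails under the scaling $u\mapsto tu$. The trade-off between the two routes is the obvious one: the paper's citation is economical and simultaneously imports the a priori error analysis it needs, whereas your direct argument makes the lemma verifiable within the paper and makes explicit where the jump-independence of $C$ and $C(\eta)$ actually comes from; in particular, your remark that the boundary edges (where $w_e=1$, $W_e=\alpha^e$, and only one element abuts each edge) are handled by the same mechanism completes the picture.
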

As a consequence of the above lemma, (\ref{eq:dp}) has a unique solution. The proof of Lemma \ref{lemma:coer} as well as an error estimate is given in \cite{Z.X.S.}.\\

\section{ The Schwarz methods}
\setcounter{equation}{0}
In this section, we use the Schwarz framework \cite{S.B.G.,T.W.} to design and analyze
our additive Schwarz domain decomposition methods for the discontinuous Galerkin
formulation. Let $\Omega$ be partitioned
into a family of nonoverlapping open subdomains $\{\Omega_i,1\leq i\leq N\}$
with $$\overline{\Omega}=\bigcup_{i=1}^N\overline{\Omega_i},\,\,\Omega_i\cap\Omega_j=\emptyset,\,\,i\neq j.$$
 Next, we give the overlapping
partition $\{\Omega_i'\}_{i=1}^N$ by extending each subregion $\Omega_i$ into a larger region
$\Omega_i'$,
 i.e., $\Omega_i\subset \Omega_i'$, so that $$\overline{\Omega}=\bigcup_{i=1}^N \Omega_i'.$$
We assume that both the nonoverlapping and overlapping partition are aligned with $\mathcal{T}_h$ and $\mathcal{T}_H$.
We denote by $\delta$ the minimum of the distance between the boundaries
of $\Omega_i$ and $\Omega_i'$, i.e., $\delta=\min\,\, {\rm dist}(\partial\Omega_i'\backslash\partial\Omega,\partial\Omega_i\backslash \partial\Omega)$.
If there exists a constant number $C>0$ such that $\delta\geq CH$, we say
that $\{\Omega_i'\}^N_{i=1}$ has generous overlap, and if $\delta$ is proportional to $h$, we say it has small overlap.\\
\indent Following the Schwarz framework, the space $V^h$ is split into a number of local
subspaces and a global coarse space, i.e., $V^h=V_0+\sum_{i=1}^N V_i^h,$ where $V_0$ is the
coarse space and $\{V_i^h\}_{i=1}^N$ are the local subspaces.
For the coarse space, $V_0$, we use the standard
multiscale finite element basis functions as described in \cite{I.P.R.,T.X.Z.b}, where, we allow the basis functions
to be continuous inside each coarse element $K$, and discontinuous across $\partial K$.
Below is the description of $V_0.$\\
\indent We define our coarse space associated with $\mathcal{T}_H$ as follows
\begin{align}
V_0=span\{\phi_{p,K}, \,\,{\rm for\,\,all}\,\, x_p\in \mathcal{N}^H(K),\,\,\forall K\in\mathcal{T}_H \},\label{eq:cs}
\end{align}
where $\mathcal{N}^H(K)$ includes all the vertex nodes of $K$, and $\phi_{p,K}$ denotes
the multiscale basis function defined in (\ref{eq:discrete}).
To be more specific, we need to introduce suitable boundary
data $\psi_{p,\partial K}$, which is required to be
piecewise linear(w.r.t. the given fine mesh $\mathcal{T}_h$ restricted to $\partial K$), and to satisfy the following:
 $\,\,\psi_{p,\partial K}(x_p'^H)=\delta_{p,p'}, \,\,\forall \,\,x_p, x_p' \in\mathcal{N}^H(K),$
 $\,\,0\leq\psi_{p,\partial K}(x)\leq 1, \,\, {\rm and} \sum_{x_p\in \mathcal{N}^H(K)}\psi_{p,\partial K}(x)=1,\,\,\forall\,\, x\in\partial K,$ and
 $\,\,\psi_{p,\partial K}(x)=0, \,\,{\rm on\,\, the\,\, edge \,\,of} \,\,K \,\,{\rm opposite}\,\, x_p.$\\
\indent An obvious choice for the boundary data $\psi_{p,\partial K}$ satisfying the above conditions is the standard linear boundary condition.\\
\indent The linear boundary condition works well when the high conductivity regions lie strictly inside coarse grid blocks. However, when they touch
the the coarse grid boundaries, linear boundary condition fails. In this case, we use another boundary condition,
also known as oscillatory boundary condition, c.f., \cite{I.P.R.}, which satisfies the above conditions and is effective. The description of this boundary condition is as follows:\\
\indent Let $\Upsilon$ be an edge of the coarse mesh $\mathcal{T}_H$ with
end points $x_p$ and $x_{p'}$, and $\alpha^\Upsilon$ be the restriction of $\alpha$ to $\Upsilon.$
Then the oscillatory boundary condition is given by the finite element solution of the following
two-point boundary value problem:
$$-(\alpha^\Upsilon(\psi_p^\Upsilon)')'=0,\,\,x\in\Upsilon$$
$$\psi_p^\Upsilon(x_p)=1,\,\,{\rm and}\,\, \psi_p^\Upsilon(x_{p'})=0.$$
\indent Since the coefficient $\alpha^\Upsilon$ is piecewise constant, the finite element solution of the above equation can be expressed explicitly by
\begin{equation}
\psi_p^\Upsilon(x)=(\int_\Upsilon(\alpha^\Upsilon)^{-1}ds)^{-1}(\int_{\Upsilon_x}(\alpha^\Upsilon)^{-1}ds),\,\,{\rm for \,\,all} \,\,x\in\Upsilon,\label{eq:ebf}
\end{equation}
 where $\Upsilon_x$ denotes the line from $x_{p'}$ to $x.$ The function $\psi_p^\Upsilon$ is continuous and piecewise linear with respect to $\Upsilon\cap \mathcal{T}_h.$ We set $\psi_{p,\partial K}=\psi_p^\Upsilon$ on each edge $\Upsilon$ of $K,$
containing $x_p,$ and $\psi_{p,\partial K}|_\Upsilon=0$ on the edge opposite to $x_p.$\\
\indent Once the boundary condition $\psi_{p,\partial K}$ is determined, $\phi_{p,K}$ is constructed
by a discrete harmonic extension inside $K.$ First, define the $P_1$-conforming finite element space associated with $\mathcal{T}_h$ as
$$S^h(\Omega)=\{v: v|_{\tau} {\rm \,\,is \,\,linear\,\, and}\,\ v \,\,{\rm is\,\, a\,\,continous\,\,function\,\,over}\,\,\Omega\}.$$
\indent Then $\phi_{p,K}\in S^h(K)$ can be defined as
\begin{align}
\int_K \alpha\nabla \phi_{p,K}\cdot \nabla v_h=0,\,\,{\rm for\,\,all}\,\,v_h\in S_0^h(K)\,\,
       {\rm subject \,\,to}\,\, \phi_{p,K}|_{\partial K}=\psi_{p,\partial K}, \label{eq:discrete}
\end{align}
where $S_0^h(K)=S^h(K)\cap H_0^1(K).$
The most important property of $\phi_{p,K}$ is the energy minimizing, which can be stated as follows,
\begin{equation}
|\phi_{p,K}|_{1,\alpha, K}\leq|\theta|_{1,\alpha,K}, \,\,{\rm for\,\, all} \,\,\theta\in S^h(K) {\rm\,\, which\,\, satisfies }\,\,\theta|_{\partial K}=\phi_{p,K}|_{\partial K}.\label{eq:em}
\end{equation}
\indent Note that our coarse space $V_0$ actually includes functions which are required to be continuous inside each coarse element $K$ and discontinuous across coarse grid boundaries, and obviously we have $V_0\subset V^h$.\\
\indent Having the above preparations, we may define $a_{0}(\cdot, \cdot)$ the bilinear form associated with
$V_0$ as
\begin{equation}
\begin{split}
a_{0}(u,v)=&\sum_{K}\int_{K}\alpha\nabla u\cdot \nabla vdx-\sum_{\partial K_{mk}\subset\Omega\atop m>k}\sum_{e_{jl}\subset \partial K_{mk}\atop j>l}\int_{e_{jl}}\{\alpha \nabla u\}_w^e[v]ds\\&-\sum_{\partial K_{mk}\subset\Omega\atop m>k}\sum_{e_{jl}\subset \partial K_{mk}\atop j>l}\int_{e_{jl}}\{\alpha \nabla v\}_w^e[u]ds+\eta\sum_{\partial K_{mk}\subset\Omega\atop m>k}\sum_{e_{jl}\subset \partial K_{mk}\atop j>l}\int_{e_{jl}} \frac{W_e}{h_e} [u][v]ds\\&-\sum_{\partial K\subset\partial\Omega}\sum_{e\subset \partial K}\int_ew_e\alpha_e \nabla u_e\cdot v_e\mathbf{n}_eds-\sum_{\partial K\subset\partial\Omega}\sum_{e\subset \partial K}\int_ew_e\alpha_e \nabla v_e\cdot u_e\mathbf{n}_eds\\&+\eta\sum_{\partial K\subset\partial\Omega}\sum_{e\subset \partial K}\int_e \frac{W_e}{h_e} u_ev_eds, \,\,\forall u, v\in V_0.\label{eq:cbf}
\end{split}
\end{equation}
\indent Note that $V_0\subset V^h,$ and $a_{0}(u,v)=a_{h}(u,v),\,\,\forall u,v \in V_0$, hence it follows from Lemma \ref{lemma:coer} that $a_{0}(v,v)$ is coercive on $V_0$.

\indent Next, we define the local spaces, $\{V_i^h\}_{i=1}^N$ associated with subdomain partition $\{B_i\}_{i=1}^N$.
\begin{align}
V_i^h=\{v\in V^h: v=0 \,\,{\rm in} \,\,\Omega\backslash \overline{B_i}\},\,\,i=1\cdot\cdot\cdot N, \label{eq:ls}
\end{align}
where $B_i=\Omega_i$ for the nonoverlapping partition, and $B_i=\Omega_i'$ for the overlapping partition.\\
\indent The corresponding local bilinear forms $a_{i}(u,v)$ can be defined as follows:
\begin{equation}
\begin{split}
a_{i}(u,v)=&\sum_{\tau\subset B_i}\int_{\tau}\alpha\nabla
u\cdot\nabla vdx-\sum_{ e_{jl}\subset B_i\atop j>l}\int_{e_{jl}}\{\alpha
\nabla u\}_w^e[v]ds-\sum_{e_{jl}\subset B_i\atop j>l}\int_{e_{jl}}\{\alpha \nabla v\}_w^e[u]ds
\\&+\eta\sum_{e_{jl}\subset B_i\atop j>l}\int_{e_{jl}}
\frac{W_e}{h_e}[u][v]ds-\sum_{ e_{jl}\subset\Gamma_i\atop j>l}\int_{e_{jl}}w_e\alpha_e \nabla u_e\cdot v_e
\mathbf{n}_eds\\&-\sum_{ e_{jl}\subset\Gamma_i\atop j>l}\int_{e_{jl}}w_e\alpha_e \nabla v_e\cdot
u_e\mathbf{n}_eds+\eta\sum_{ e_{jl}\subset\Gamma_i\atop j>l}\int_{e_{jl}} \frac{W_e}{h_e} u_ev_eds\\&-\sum_{ e\subset\Xi_i}\int_{e}w_e\alpha_e \nabla u_e\cdot v_e
\mathbf{n}_eds-\sum_{ e\subset\Xi_i}\int_{e} w_e\alpha_e\nabla v_e\cdot
u_e\mathbf{n}_eds\\&+\eta\sum_{ e\subset\Xi_i}\int_{e} \frac{W_e}{h_e} u_ev_eds,\,\, \forall u, \,\,v\in V^h_i, \label{eq:lbf}
\end{split}
\end{equation}
where $\Gamma_i=\partial B_i\backslash \partial \Omega, \Xi_i=\partial B_i\cap\partial \Omega.$\\
\indent Note that $V_i^h\subset V^h,$ and $a_{i}(u,v)=a_{h}(u,v),\,\,\forall u,v \in V_i^h$, hence it follows from Lemma \ref{lemma:coer} that $a_{i}(v,v)$ is coercive on $V_i^h$.
\begin{rem}
{\rm
Note that the local bilinear form $a_{i}(\cdot,\cdot)$ on $V_i^h\times V_i^h$ is actually the
restriction of $a_{h}(\cdot,\cdot)$ on $V_i^h\times V_i^h.$ Thus, in the implementation of this method,
after we build the global stiffness matrix $A$ corresponding to the bilinear form $a_{h}(\cdot,\cdot),$ we
can easily get the local stiffness matrix $A_i$ corresponding to $a_{i}(\cdot,\cdot)$ by taking the $i$-$th$ diagonal block of $A.$
}
\end{rem}

\indent With this preparation, the two level additive Schwarz domain decomposition method can be presented as follows.

\indent For $1\leq i\leq N,$ we define the operators $T_i:V^h\rightarrow V_i^h$ by
$$a_{i}(T_i u,v)=a_{h}(u,v),\,\, \forall u\in V^h,\,\,v\in V_i^h,$$
and for $i=0$, we define the operator $T_0$ by
$$a_{0}(T_0 u,v)=a_{h}(u,v),\,\,\forall u\in V^h,\,\,v\in V_0.$$
Clearly, each of these problems has a unique solution.

We define the additive operator as
$$T=T_0+T_1+\cdot\cdot\cdot+T_N,$$
and replace (\ref{eq:dp}) by the operator equation
\begin{align}
Tu_h=f_h,\label{eq:ddm}
\end{align}
where $$f_h=\sum_{i=0}^Ng_i \,\, \, \mbox{with } \, g_i=T_iu_h,$$ and $u_h$ is the solution of (\ref{eq:dp}).\\

\subsection{\rm Analysis}
In this section, we estimate the condition number
of both the nonoverlapping and overlapping additive Schwarz method.
We use the standard Schwarz framework \cite{S.B.G.,T.W.}.\\
\indent  We will see that, for the nonoverlapping method, the condition number will
depend on the two indicators $\gamma(\alpha)$ and $\beta(\alpha)$, while for the overlapping method,
it depends on an additional indicator $\pi(\alpha)$ which will be defined in Section 3.1.2.\\
\indent The first indicator $\gamma(\alpha)$, which is borrowed from \cite{I.P.R.}, measures the maximum weighted energy of all the coarse basis
functions, which can be used to have an indication of how well the coarse basis functions are
constructed.
We will show that,  by choosing suitable boundary conditions, $\gamma(\alpha)$ can be bounded independently of the jumps in the coefficients.

\indent $\mathbf{Coarse\,\, robustness\,\, indicator}$ \cite{I.P.R.}.
Given a coarse triangulation $\mathcal{T}_H,$ and the set of coarse basis
functions $\{\phi_{p,K}, \,\, \forall x_p\in\mathcal{N}^H(K),\,\, \forall K\in\mathcal{T}_H,\}$ then
$$\gamma(\alpha)=\max_{K\in \mathcal{T}_H}\max_{x_p\in\mathcal{N}^H(K)}|\phi_{p,K}|_{1,\alpha, K}^2.$$
\indent Next, we introduce the indicator $\beta(\alpha),$ which
measures the weighted $L_2$ norm of the jump of the multiscale basis functions on the coarse grid boundaries. It is the maximum value of $\beta^I(\alpha),$ which corresponds to the integration on the inner coarse grid boundaries that are inside $\Omega$, and $\beta^B(\alpha),$ which corresponds to the integration on the coarse grid boundaries which intersect with $\partial\Omega.$  The term $\beta(\alpha)$ enters into our analysis due to the use of the DG bilinear form on the coarse space.

\indent $\mathbf{Multiscale\,\, DG\,\, indicator}$.
Given a coarse triangulation $\mathcal{T}_H,$ and the set of coarse basis
functions $\{\phi_{p,K}, \forall x_p \in\mathcal{N}^H(K),\,\, \forall K\in\mathcal{T}_H,\}$ then
\begin{align}
\beta^I(\alpha)= \max_{\partial K_{mk}\subset\Omega\atop m>k}\max_{x_p\in\mathcal{N}^H(K_m)\atop x_p\in\overline{ \partial K_{mk}}}
 \sum_{e_{jl}\subset\partial K_{mk}\atop j>l}(\int_{e_{jl}}\frac{W_e}{h_e}|[\phi_p]|^2ds). \label{eq:betaI}
\end{align}
where
$[\phi_p]=\phi_{p,K_m}-\phi_{p,K_k},$ $\partial K_{mk}$ is the edge shared by the coarse elements $K_m$ and $K_k,$ $W_e$ is defined in (\ref{eq:weight}), and $h_e$ is the length of the edge $e_{jl}.$\\
\indent  However, when $\partial K_m\cap \partial \Omega\neq\emptyset,$ we set
\begin{align}
\beta^B(\alpha)=\max_{\partial K_m\subset \partial\Omega}\max_{x_p\in\mathcal{N}^H(K_m)\atop x_p\in \overline{\partial K_m}}
  \sum_{e\subset\partial K_m}(\int_e\frac{W_e}{h_e}|\phi_{p,m}|^2ds). \label{eq:betaB}
\end{align}
\indent Having all the above preparations, we define
\begin{align}
\beta(\alpha)=\max\big{(}\beta^I(\alpha),\beta^B(\alpha)\big{)}. \label{eq:beta}
\end{align}

\subsubsection{\rm  Nonoverlapping additive Schwarz method}
In this section, we propose a two level additive Schwarz method with nonoverlapping subdomains, and present an analysis of the condition number. Since it is a two level additive Schwarz method, we use $V_0$ in the previous section as our coarse space with bilinear form $a_0(\cdot,\cdot)$. For the local subspaces, by taking $B_i=\Omega_i$ in (\ref{eq:ls}), we have
\begin{align*}
V_i^h=\{v\in V^h: v=0 \,\,{\rm in} \,\,\Omega\backslash \overline{\Omega_i}\},\,\,i=1\cdot\cdot\cdot N.
\end{align*}
\indent Note that, in this case, $V^h$ is a direct sum of $\{V_i^h\}.$\,\, The local
forms $a_{i}(\cdot,\cdot),\,\,1\leq i\leq N$ are given by (\ref{eq:lbf}) with $B_i=\Omega_i$.\\
\indent In order to estimate the condition number of the two level nonoverlapping domain decomposition method,
we need to define an interface bilinear form, $I(\cdot,\cdot):V^h\times V^h\rightarrow R,$ as follows: $\forall u,v \in V^h$,
\begin{align}
\begin{split}
I(u,v)=-\sum_{\partial K_{mk}\atop m>k}\sum_{e_{jl}\subset\partial K_{mk}\atop j>l}&\{\int_{ e_{jl}}
\alpha_jw_j\nabla u_j\cdot v_l\mathbf{n}_lds+ \int_{e_{jl}}
\alpha_jw_j\nabla v_j\cdot u_l\mathbf{n}_lds\\
+&\int_{ e_{jl}}
\alpha_lw_l\nabla u_l\cdot v_j\mathbf{n}_jds+ \int_{e_{jl}}
\alpha_lw_l\nabla v_l\cdot u_j\mathbf{n}_jds\\
+&\eta\int_{ e_{jl}}\frac{W_e}{h_e}
u_jv_lds+\eta \int_{e_{jl}}\frac{W_e}{h_e}
u_lv_jds \},
\end{split}
\label{eq:interface}
\end{align}
where $u_j=u|_{\tau_j}\,\,{\rm and}\,\,u_l=u|_{\tau_l},\,\,{\rm with}\,\,e_{jl}{\rm \,\,being \,\,the\,\, common\,\, edge\,\, between\,\, } \tau_j\,\, {\rm and} \,\,\tau_l.$\\
\indent Observe that the relationship between
the bilinear form on the fine space and the bilinear forms on the local subspaces is given by
\begin{align}
a_{h}(u,v)=\sum_{i=1}^Na_{i}(u_i,v_i)+I(u,v) \,\,\, \forall u,v \in V^h, \,\,u_i, v_i\in V_i^h,
\label{eq:relation}
\end{align}
where $u=\sum_{i=1}^Nu_i,$ and $v=\sum_{i=1}^Nv_i.$\\[\baselineskip]
\indent For the proof of (\ref{eq:relation}), one only need to compare the terms in $a_h(u,v)$ with those in $\sum_{i=1}^Na_{i}(u_i,v_i),$ $\forall u,v \in V^h, \,\,u_i, v_i\in V_i^h.$  \\
\indent The next lemma which states the Poincar\'{e} and a trace inequality for our discontinuous case, has been proved in \cite{X.O.}.

\begin{lem}
 Let $D$ be a convex domain, $\{\mathcal{T}_D\}$ be a family of partitions
of $D$,  and $D=\bigcup_{\tau\in \mathcal{T}_D}\tau$ with $diam(\tau)\approx h$,
then for any $u\in V_D=\Pi_{\tau\subset \mathcal{T}_D}H^1(\tau),$ with
$\overline{u}=\frac{1}{|D|}\int_D u dx$ being the average value of $u$ over $D,$ we have
$$\|u-\overline{u}\|_{0,D}\lesssim diam(D)|u|_{1,h,D},$$
where $$|u|_{1,h,D}^2=\sum_{\tau\subset D}\|\nabla u\|_{0,\tau}^2+\sum_{e_{jl}\subset D\atop j>l}h^{-1}\|[u]\|_{0,e_{jl}}^2,$$
and $[u]$ denote the jump on the edge $e_{jl}.$
Consequently, if $diam(D)\approx O(H),$ we have
$$||u||_{0,\partial D}^2\lesssim H^{-1}||u||_{0,D}^2+H|u|_{1,h,D}^2.$$
\label{lemma:p}
\end{lem}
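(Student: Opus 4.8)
The plan is to treat the two inequalities separately, since the broken Poincar\'{e} bound is most cleanly obtained by a duality argument while the trace bound follows from an element-wise application of the divergence theorem.

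For the first inequality I would introduce the auxiliary Neumann problem $-\Delta w=u-\overline{u}$ in $D$ with $\partial_n w=0$ on $\partial D$, normalized so that $\int_D w\,dx=0$; this is solvable precisely because $u-\overline{u}$ has zero mean. Since $D$ is convex, elliptic regularity gives $|w|_{2,D}\lesssim\|u-\overline{u}\|_{0,D}$, and the standard Poincar\'{e} inequality for $w$ yields $\|\nabla w\|_{0,D}\lesssim \mathrm{diam}(D)\,\|u-\overline{u}\|_{0,D}$. Using $\int_D\Delta w\,dx=0$, I would write $\|u-\overline{u}\|_{0,D}^2=-\int_D(\Delta w)\,u\,dx$ and integrate by parts element by element. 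The volume contributions assemble into $\sum_\tau\int_\tau\nabla w\cdot\nabla u\,dx$, controlled by $\|\nabla w\|_{0,D}$ times the piecewise-gradient part of $|u|_{1,h,D}$. Because $w\in H^2(D)$ its gradient is single-valued across interior edges and its normal derivative vanishes on $\partial D$, so the edge contributions collapse to $\sum_{e}\int_e(\partial_n w)[u]\,ds$; a scaled trace inequality for $\nabla w$, namely $h\|\partial_n w\|_{0,e}^2\lesssim \|\nabla w\|_{0,\tau}^2+h^2|\nabla w|_{1,\tau}^2$, matches these against the jump part $\sum_e h^{-1}\|[u]\|_{0,e}^2$ of $|u|_{1,h,D}^2$. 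Collecting the estimates, dividing by $\|u-\overline{u}\|_{0,D}$, and using $h\le\mathrm{diam}(D)$ produces the claimed bound.

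For the trace inequality I would exploit convexity to choose a vector field $\mathbf{F}$ on $D$ (for instance $\mathbf{F}(x)=x-x_0$ about an interior point) satisfying $\mathbf{F}\cdot\mathbf{n}\gtrsim H$ on $\partial D$, $\|\mathbf{F}\|_{0,\infty,D}\lesssim H$, and $\nabla\cdot\mathbf{F}=2$. Applying the divergence theorem to $u^2\mathbf{F}$ on each element and summing, the boundary edges give $\int_{\partial D}u^2(\mathbf{F}\cdot\mathbf{n})\,ds\gtrsim H\|u\|_{0,\partial D}^2$, the interior edges produce jump terms $\sum_e\int_e(u_j^2-u_l^2)(\mathbf{F}\cdot\mathbf{n})\,ds$, and the volume side equals $\sum_\tau\int_\tau(2u^2+2u\,\mathbf{F}\cdot\nabla u)\,dx$, which is bounded by $\|u\|_{0,D}^2+H\|u\|_{0,D}|u|_{1,h,D}$. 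For the jump terms I would factor $u_j^2-u_l^2=(u_j-u_l)(u_j+u_l)$, apply Cauchy--Schwarz edgewise, and combine the local trace inequality $\|u_j\|_{0,e}^2\lesssim h^{-1}\|u\|_{0,\tau_j}^2+h\|\nabla u\|_{0,\tau_j}^2$ with $\big(\sum_e\|[u]\|_{0,e}^2\big)^{1/2}\le h^{1/2}|u|_{1,h,D}$. Dividing through by $H$ and repeatedly applying Young's inequality (to split each product into an $H^{-1}\|u\|_{0,D}^2$ piece and an $H|u|_{1,h,D}^2$ piece, absorbing $h\le H$ where needed) then yields $\|u\|_{0,\partial D}^2\lesssim H^{-1}\|u\|_{0,D}^2+H|u|_{1,h,D}^2$.

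The two structural inputs I would flag as essential are the convexity of $D$, which simultaneously supplies the $H^2$-regularity estimate for the Neumann problem and the admissible field $\mathbf{F}$, and the quasi-uniformity $\mathrm{diam}(\tau)\approx h$, which is what lets the scaled local trace inequalities convert edge quantities into volume quantities with the correct powers of $h$. I expect the main obstacle to be the interior-edge jump terms in the trace estimate: unlike the Poincar\'{e} argument, where the jumps enter linearly and pair directly against $\sum_e h^{-1}\|[u]\|_{0,e}^2$, here they appear through the quadratic difference $u_j^2-u_l^2$, and the bookkeeping must be arranged so that the factors of $h$ coming from the local trace inequalities cancel against $\big(\sum_e\|[u]\|_{0,e}^2\big)^{1/2}$ and the final Young splitting lands exactly on $H^{-1}\|u\|_{0,D}^2+H|u|_{1,h,D}^2$ without leaving behind any unfavorable power of $H/h$.
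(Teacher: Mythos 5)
The paper offers no proof of this lemma at all --- it simply quotes the result from \cite{X.O.} (Feng and Karakashian) --- so the comparison is with that reference rather than with an internal argument. Your proposal is correct, and it is in substance the standard proof, i.e., essentially the one in the cited source: the broken Poincar\'e inequality via duality with the homogeneous Neumann problem $-\Delta w=u-\overline{u}$, $\partial_n w=0$, where convexity enters only through the $H^2$ regularity bound $|w|_{2,D}\lesssim\|u-\overline{u}\|_{0,D}$ (exactly the hypothesis that the remark immediately following the lemma says can be dropped by other techniques), followed by elementwise integration by parts in which the single-valuedness of $\nabla w$ and the Neumann condition collapse the boundary terms to $\sum_e\int_e(\partial_n w)[u]\,ds$, matched against the jump part of $|u|_{1,h,D}$ by the scaled trace inequality for $\nabla w$; and the broken trace inequality via the divergence theorem applied to $u^2\mathbf{F}$ with $\mathbf{F}=x-x_0$, the quadratic jumps factored as $(u_j-u_l)(u_j+u_l)$ and absorbed by Cauchy--Schwarz, the local trace inequality, and Young's inequality. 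I checked the bookkeeping in both halves (in particular $\|\nabla w\|_{0,D}\lesssim\mathrm{diam}(D)\|u-\overline{u}\|_{0,D}$ from $\|\nabla w\|_{0,D}^2=\int_D w(u-\overline{u})\,dx$ and Poincar\'e for the zero-mean $w$, and the cancellation of the $h$-powers in the jump terms of the trace estimate), and it lands on the stated bounds. The one point to flag is that in the trace part convexity alone does not give $\mathbf{F}\cdot\mathbf{n}\gtrsim H$ on $\partial D$: that requires $D$ to be star-shaped with respect to a ball of radius comparable to $\mathrm{diam}(D)$, and a thin convex slab fails this, so your constant degrades with the aspect ratio of $D$. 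This is harmless in context, since the lemma is applied to shape-regular coarse elements and subdomains and the paper's $\lesssim$ convention already admits dependence on shape regularity, but it deserves an explicit sentence since you attribute the existence of $\mathbf{F}$ to convexity alone.
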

\begin{rem}
{\rm
Note that, in the above Lemma, the condition on the convexity of the domain $D$ is actually too strict. It is stated in \cite{X.O.} that this can be dropped.
}
\end{rem}
\indent Define the restriction operator $R_H: V^h(\Omega)\rightarrow V_0(\Omega)$ as follows:
$$ R_H u(x_p)=\overline{u}_p,\,\, \forall u\in V^h(\Omega),\,\,\forall x_p\in \mathcal{N}^H(K),\,\,\forall K\in\mathcal{T}_H,$$
where $\overline{u}_p=\frac{1}{|\omega_p|}\int_{\omega_p} u dx,$ with $\omega_p$ being the union of elements sharing $x_p,$ as shown in the figure below.

\begin{figure*}[htb]
  % Requires \usepackage{graphicx}
  \centering
  \includegraphics[width=2.5cm, height=2.5cm]{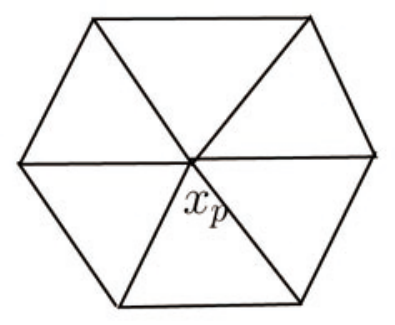}
  \centering
  %\caption{~~~~~~~~~~~~~~~~~~~~~~~~~~~~~~$\omega_p$ for vertex $x_p$.}
\end{figure*}

\begin{lem}

For the restriction operator defined above, the following approximation and stability properties hold
$$ \|u-R_H u\|_{0,K}^2\leq CH^2|u|_{1,h,\omega_K}^2, \,\,\forall u\in V^h(\Omega).$$
$$ a_{0}(R_Hu,R_Hu)\leq C \max\big{(}\gamma(\alpha),\eta\beta(\alpha)\big{)}a_{h}(u,u), \,\,\forall u\in V^h(\Omega).$$
Particularly, if $\alpha(x)\equiv1,\,\,\forall x\in\Omega,$ and $\eta=1,$ then we have
$$ a_{0}(R_Hu,R_Hu)\leq C\max\big{(}\gamma(1),\beta(1)\big{)}a_{h}(u,u), \,\,\forall u\in V^h(\Omega),$$
where $\omega_K=\bigcup_{p\in\mathcal{N}^H(K)}\omega_p,$ and the constant $C$ is independent of $h$ and $H.$
\label{lemma:approx}
\end{lem}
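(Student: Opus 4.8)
The plan is to prove the two inequalities separately, treating the approximation bound as a warm-up for the more delicate stability bound. Three structural facts about the multiscale basis will be used throughout: the partition of unity $\sum_{x_p\in\mathcal{N}^H(K)}\phi_{p,K}\equiv 1$ on each $K$ (valid because the boundary data sum to $1$ and the discrete $\alpha$-harmonic extension in (\ref{eq:discrete}) reproduces constants), the pointwise bound $0\le\phi_{p,K}\le 1$ from the discrete maximum principle, and the consequent reproduction of constants $R_Hc=c$. I will also repeatedly use that $\overline{u}_p$ is exactly the mean of $u$ over $\omega_p$, that $K\subset\omega_p\subset\omega_K$ with $\mathrm{diam}(\omega_p)\approx\mathrm{diam}(\omega_K)\approx H$ and $|\omega_p|\approx H^2$, and that the patches $\{\omega_K\}$ have bounded overlap, so $\sum_K|u|_{1,h,\omega_K}^2\lesssim|u|_{1,h,\Omega}^2$.

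For the approximation estimate I would write, on a fixed $K$, $u-R_Hu=\sum_{x_p}(u-\overline{u}_p)\phi_{p,K}$ using the partition of unity, and then bound $\|u-R_Hu\|_{0,K}\le\sum_{x_p}\|u-\overline{u}_p\|_{0,K}$ via $|\phi_{p,K}|\le 1$. Since $K\subset\omega_p$, Lemma \ref{lemma:p} applied on $\omega_p$ gives $\|u-\overline{u}_p\|_{0,K}\le\|u-\overline{u}_p\|_{0,\omega_p}\lesssim H\,|u|_{1,h,\omega_p}\lesssim H\,|u|_{1,h,\omega_K}$. Summing over the boundedly many vertices of $K$ and squaring yields the first claim with a constant depending only on shape regularity.

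For the stability estimate the idea is to bound $a_0(R_Hu,R_Hu)$ through the three groups of terms in (\ref{eq:cbf}), with the key numerical ingredient being the average–difference bound $|\overline{u}_p-\overline{u}_{\omega_K}|\lesssim|u|_{1,h,\omega_K}$, obtained from $|\overline{u}_p-\overline{u}_{\omega_K}|\le|\omega_p|^{-1/2}\|u-\overline{u}_{\omega_K}\|_{0,\omega_K}$ together with $|\omega_p|\approx H^2$ and Lemma \ref{lemma:p}. For the interior energy, $\nabla R_Hu=\sum_{x_p}(\overline{u}_p-\overline{u}_{\omega_K})\nabla\phi_{p,K}$ (using $\sum_p\nabla\phi_{p,K}=0$) gives $|R_Hu|_{1,\alpha,K}^2\lesssim\gamma(\alpha)\sum_p(\overline{u}_p-\overline{u}_{\omega_K})^2\lesssim\gamma(\alpha)|u|_{1,h,\omega_K}^2$ by the definition of $\gamma(\alpha)$. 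For the jumps across an interior coarse edge $\partial K_{mk}$, only the two shared endpoints contribute (the third basis function of each element vanishes on the opposite edge), and since these two satisfy $\sum_p[\phi_p]=0$ on $\partial K_{mk}$ one may subtract $\overline{u}_{\omega_K}$ to get $[R_Hu]=\sum_p(\overline{u}_p-\overline{u}_{\omega_K})[\phi_p]$; then the definition (\ref{eq:betaI}) of $\beta^I(\alpha)$ gives $\sum_{e}\frac{W_e}{h_e}\|[R_Hu]\|_{0,e}^2\lesssim\beta^I(\alpha)|u|_{1,h,\omega_K}^2$. For the boundary terms one cannot subtract a constant, so I would bound $\sum_{e\subset\partial K}\frac{W_e}{h_e}\|R_Hu\|_{0,e}^2\lesssim\beta^B(\alpha)\sum_p\overline{u}_p^2$ using (\ref{eq:betaB}), and then control each $\overline{u}_p^2\le|\omega_p|^{-1}\|u\|_{0,\omega_p}^2$ by a Friedrichs-type inequality (the companion of Lemma \ref{lemma:p} for patches meeting $\partial\Omega$), $\overline{u}_p^2\lesssim|u|_{1,h,\omega_p}^2+H^{-1}\|u\|_{0,\partial\Omega\cap\partial\omega_p}^2$, whose boundary piece is absorbed into the boundary part of $a_h(u,u)$ since $W_e\ge\alpha_0$ and $h_e\le H$. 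Summing over $K$ and edges with bounded overlap, and noting that the two penalty groups carry the explicit factor $\eta$ while the energy group does not, produces $a_0(R_Hu,R_Hu)\lesssim\big(\gamma(\alpha)+\eta\beta^I(\alpha)+\eta\beta^B(\alpha)\big)\,a_h(u,u)$, i.e. the asserted $\max\big(\gamma(\alpha),\eta\beta(\alpha)\big)$ bound through (\ref{eq:beta}); the case $\alpha\equiv1$, $\eta=1$ is the same computation with $W_e=1$ and gives $\max(\gamma(1),\beta(1))$.

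The step I expect to be the main obstacle is keeping the constant $C$ genuinely independent of the contrast. All coefficient-dependence must be funneled into $\gamma(\alpha)$ and $\beta(\alpha)$, which forces me to match the unweighted seminorms $|u|_{1,h,\omega_K}$ coming out of Lemma \ref{lemma:p} against the weighted form $a_h(u,u)$; this is legitimate because $\alpha\ge\alpha_0>0$ and $W_e\ge\alpha_0$ give $|u|_{1,h,\Omega}^2\lesssim\|u\|_{1,h,\alpha,\Omega}^2\approx a_h(u,u)$ with a constant depending only on the fixed background $\alpha_0$, but one must verify that no hidden factor $\max\alpha/\min\alpha$ leaks in at any of the three groups. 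The most technically delicate piece is the boundary group, where the absence of a constant to subtract requires the Friedrichs-type control of the genuine nodal averages $\overline{u}_p^2$ rather than their differences, and care that the boundary penalty weight $W_e/h_e$ dominates the $H^{-1}$ weight produced there.
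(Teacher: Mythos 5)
Most of what you do tracks the paper's own proof: your estimates for the interior energy group (via $\gamma(\alpha)$ and the average--difference bound), the interior jump penalties (via $\beta^I(\alpha)$ after subtracting a constant), and the boundary penalties (via $\beta^B(\alpha)$) are, up to cosmetics (you subtract $\overline{u}_{\omega_K}$ from the nodal averages where the paper replaces $u$ by $\widehat{u}=u-\frac{1}{|\omega_K|}\int_{\omega_K}u\,dx$), the same as the paper's treatment of its terms $I_1$, $I_3$, $I_5$; your partition-of-unity proof of the approximation bound is a legitimate variant of the paper's, and your Friedrichs-type handling of the boundary group is actually more explicit than the paper's terse ``same techniques''. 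However, there is a genuine gap: the coarse bilinear form (\ref{eq:cbf}) does not consist of three groups of terms. It also contains the consistency (flux--jump coupling) terms
\begin{equation*}
-2\sum_{\partial K_{mk}\subset\Omega,\ m>k}\ \sum_{e_{jl}\subset\partial K_{mk},\ j>l}\int_{e_{jl}}\{\alpha\nabla R_Hu\}_w^e\,[R_Hu]\,ds
\quad\text{and}\quad
-2\sum_{\partial K\subset\partial\Omega}\ \sum_{e\subset\partial K}\int_{e}w_e\alpha_e\nabla (R_Hu)_e\cdot (R_Hu)_e\,\mathbf{n}_e\,ds,
\end{equation*}
which your argument never bounds. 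These terms are sign-indefinite and of the same order as the ones you control, so they cannot be dropped; worse, they are precisely the place where the coefficient contrast could leak into the constant --- the concern you flagged at the end but then did not confront, since the quantity $\{\alpha\nabla R_Hu\}_w^e$ carries the coefficient explicitly and is not controlled by $\gamma(\alpha)$ or $\beta(\alpha)$ alone.

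The paper closes exactly this hole in its estimates of the terms it calls $I_2$ and $I_4$: first Cauchy--Schwarz in the balanced form
\begin{equation*}
\int_{e}\{\alpha\nabla v\}_w^e[v]\,ds\ \le\ \Big(\int_{e}\frac{h_e}{W_e}\,\big|\{\alpha\nabla v\}_w^e\big|^2ds\Big)^{1/2}\Big(\int_{e}\frac{W_e}{h_e}\,\big|[v]\big|^2ds\Big)^{1/2},
\end{equation*}
then the harmonic-average weight inequality (\ref{eq:weight-ineq}), $(w_\pm^e)^2\alpha_\pm^e/W_e\le 1$, together with the fact that $\nabla R_Hu$ is constant on each fine triangle (so $h_e\|\alpha^{1/2}\nabla v\|_{0,e}^2\lesssim\|\alpha^{1/2}\nabla v\|_{0,\tau_\pm^e}^2$), to absorb the first factor into the energy term; this yields $|I_2|\lesssim I_1+I_3$ and $|I_4|\lesssim I_1+I_5$ with constants independent of the contrast, and only then does the final bound follow. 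An alternative one-line repair would be to invoke continuity of the DG form on $V_0$ (noting $a_0=a_h$ on $V_0\times V_0$, and that for $v\in V_0$ the DG norm collapses to exactly your three groups because $v$ is continuous inside each coarse element), but then you must use a version of continuity measured in the $\eta$-weighted norm with a constant independent of $\eta\ge 1$ --- the constant in Lemma \ref{lemma:coer}(i) as stated may depend on $\eta$, and using it naively would put a spurious factor $\eta$ on $\gamma(\alpha)$. Either way, some explicit argument for the consistency terms is required; without it, the inequality you claim does not follow from the estimates you prove.
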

\begin{proof}
Since
\begin{align}
\overline{u}_p=\frac{1}{|\omega_p|}\int_{\omega_p} u dx\leq
C\frac{1}{|\omega_p|}\|u\|_{0,\omega_p}|\omega_p|^{\frac{1}{2}}\leq C\frac{1}{|\omega_p|^{\frac{1}{2}}}\|u\|_{0,\omega_p},\label{eq:up}
\end{align}
we have
\begin{equation*}
\begin{aligned}
\|u-R_H u\|_{0,K}^2
\leq &C\|u\|_{0,K}^2+C\|R_Hu\|_{0,K}^2\\
\leq &C\|u\|_{0,K}^2+C\sum_{x_p\in \mathcal{N}^H(K)}\frac{1}{|\omega_p|}\|u\|_{0,\omega_p}^2
\leq C\|u\|_{0,K}^2+C\frac{1}{|K|}\|u\|_{0,\omega_K}^2,
\end{aligned}
\end{equation*}
where in the last inequality we have used $|\omega_p|>|K|$ and $\omega_p$ is part of $\omega_K.$\\
 \indent Since our interpolation operator $R_H$ keeps constants unchanged, we can
take $u-\frac{1}{|\omega_K|}\int_{\omega_K} u dx$ instead of $u$, and the approximation
property holds due to the Poincar\'{e} inequality in Lemma \ref{lemma:p}.\\
\indent Next, we prove the stability property. Let
\begin{align*}
\begin{split}
a_{0}(R_Hu,R_Hu)=&\sum_K\int\alpha\nabla R_Hu\cdot
\nabla R_Hudx\\&-2\sum_{\partial K_{mk}\subset\Omega\atop m>k}\sum_{e_{jl}\subset\partial
K_{mk}\atop j>l}\int_{e_{jl}}\{\alpha \nabla R_Hu\}_w^e[R_Hu]ds\\ &+\eta\sum_{\partial K_{mk}\subset\Omega\atop m>k}\sum_{e_{jl}\subset \partial
K_{mk}\atop j>l}\int_{e_{jl}} \frac{W_e}{h_e} [R_Hu][R_Hu]ds\\&-2\sum_{\partial K\subset\partial\Omega}\sum_{e\subset \partial K}\int_ew_e\alpha_e \nabla (R_Hu)_e\cdot (R_Hu)_e\mathbf{n}_eds\\&+\eta\sum_{\partial K\subset\partial\Omega}\sum_{e\subset \partial K}\int_e \frac{W_e}{h_e} (R_Hu)_e(R_Hu)_eds
\\=&I_1-2I_2+I_3-2I_4+I_5.
\end{split}
\end{align*}
\indent We begin by estimating the term $I_1.$ For each $ K\in \mathcal{T}_H,$
with $\widehat{u}=u-\frac{1}{|\omega_K|}\int_{\omega_K} u dx,$  it follows from
the Poincar\'{e} inequality in Lemma \ref{lemma:p} that
\begin{align*}
\begin{split}
|R_Hu|_{1,\alpha,K}^2=|R_H\widehat{u}|_{1,\alpha,K}^2
\lesssim&\sum_{x_p\in\mathcal{N}^H(K)}(|\omega_p|^{-1}||\widehat{u}||_{0,\omega_p}^2)|\phi_{p,K}|_{1,\alpha,K}^2\\
\lesssim&\gamma(\alpha)|K|^{-1}||\widehat{u}||_{0,\omega_K}^2\lesssim\gamma(\alpha)|u|_{1,h,\omega_K}^2.
\end{split}
\end{align*}
\indent Summing over the elements $K\in\mathcal{T}_H,$ we get
\begin{align}
I_1\lesssim\gamma(\alpha)\sum_K|u|_{1,h,\omega_K}^2\lesssim \gamma(\alpha)a_{h}(u,u).
\label{eq:est1}
\end{align}
\indent Next, we estimate the term $I_2.$ Let $v=R_Hu,$ then by the Cauchy-Schwarz inequality
\begin{align*}
\begin{split}
|I_2|=&\sum_{\partial K_{mk}\subset\Omega\atop m>k}\sum_{e_{jl}\subset \partial K_{mk}\atop j>l}\int_{e_{jl}}\{\alpha \nabla v\}_w^e[v]ds\\
\lesssim&\sum_{\partial K_{mk}\subset\Omega\atop m>k}\sum_{e_{jl}\subset \partial K_{mk}\atop j>l}(\int_{e_{jl}}|\{\alpha\nabla v\}_w^e|^2ds)^{1/2}(\int_{e_{jl}}|[v]|^2ds)^{1/2}\\
\lesssim&\sum_{\partial K_{mk}\subset\Omega\atop m>k}\sum_{e_{jl}\subset
\partial K_{mk}\atop j>l}(\int_{e_{jl}}\frac{h_e}{W_e}|\{\alpha \nabla v\}_w^e|^2ds)+\sum_{\partial
K_{mk}\subset\Omega\atop m>k}\sum_{e_{jl}\subset \partial K_{mk}\atop j>l}\int_{e_{jl}}\frac{W_e}{h_e}|[v]|^2ds.
\end{split}
\end{align*}
\indent From the definition of $\{\cdot\}_w^e$, and (\ref{eq:weight-ineq}), we have
\begin{align*}
\begin{split}
|I_2|\lesssim&\sum_{\partial K_{mk}\subset\Omega\atop m>k}\sum_{e_{jl}\subset \partial K_{mk}\atop j>l}(\int_{e_{jl}}\frac{h_e}{W_e}|w_-^e\alpha_-^e \nabla v_-^e+w_+^e\alpha_+^e \nabla v_+^e|^2ds)\\
&+\sum_{\partial K_{mk}\subset\Omega\atop m>k}\sum_{e_{jl}\subset \partial K_{mk}\atop j>l}\int_{e_{jl}}\frac{W_e}{h_e}|[v]|^2ds\\
\lesssim&\sum_{\partial K_{mk}\subset\Omega\atop m>k}\sum_{e_{jl}\subset \partial K_{mk}\atop j>l}(\int_{e_{jl}}\ h_e \max(\frac{(w_-^e)^2\alpha_-^e}{W_e},\frac{(w_+^e)^2\alpha_+^e}{W_e})(\alpha_-^e |\nabla v_-^e|^2+\alpha_+^e|\nabla v_+^e|^2)ds)\\
&+\sum_{\partial K_{mk}\subset\Omega\atop m>k}\sum_{e_{jl}\subset \partial K_{mk}\atop j>l}\int_{e_{jl}}\frac{W_e}{h_e}|[v]|^2ds\\
\lesssim&\sum_{\partial K_{mk}\subset\Omega\atop m>k}\sum_{e_{jl}\subset
\partial K_{mk}\atop j>l}(\int_{e_{jl}} h_e\alpha_-^e |\nabla v_-^e|^2ds+\int_{e_{jl}} h_e\alpha_+^e
|\nabla v_+^e|^2ds)\\
&+\sum_{\partial K_{mk}\subset\Omega\atop m>k}\sum_{e_{jl}\subset \partial K_{mk}\atop j>l}\int_{e_{jl}}\frac{W_e}{h_e}|[v]|^2ds .
\end{split}
\end{align*}
\indent Let $\tau_+^e$ and $\tau_-^e$ be the two fine elements sharing the edge $e,$ and note
that $\nabla v_+^e$ and $\nabla v_-^e$ are constants on $\tau_+^e$ and $\tau_-^e$ respectively. Then
\begin{align}
\begin{split}
|I_2|\lesssim&\sum_{\partial K_{mk}\subset\Omega\atop m>k}\sum_{e_{jl}\subset
\partial K_{mk}\atop j>l}(\int_{\tau_-^e}\alpha_-^e |\nabla v_-^e|^2ds+\int_{\tau_+^e}\alpha_+^e
|\nabla v_+^e|^2ds)\\
&+\sum_{\partial K_{mk}\subset\Omega\atop m>k}\sum_{e_{jl}\subset \partial K_{mk}\atop j>l}\int_{e_{jl}}\frac{W_e}{h_e}|[v]|^2ds\\
\lesssim&\sum_{\partial K_{mk}\subset\Omega\atop m>k}\sum_{e_{jl}\subset
\partial K_{mk}\atop j>l}(\int_{\tau_+^e\cup\tau_-^e}|\alpha^{1/2} \nabla v|^2dx)+\sum_{\partial
K_{mk}\subset\Omega\atop m>k}\sum_{e_{jl}\subset \partial K_{mk}\atop j>l}\int_{e_{jl}}\frac{W_e}{h_e}|[v]|^2ds\\
\lesssim&\sum_K\int_K|\alpha^{1/2} \nabla R_Hu|^2dx+\sum_{\partial K_{mk}\subset\Omega\atop m>k}\sum_{e_{jl}\subset
\partial K_{mk}\atop j>l}\int_{e_{jl}}\frac{W_e}{h_e}|[R_Hu]|^2ds\lesssim I_1+I_3.
\end{split}
\label{eq:est2}
\end{align}

\indent We now consider the term $I_3.$ First we note that, for all$\,\,\partial K_{mk},\,\,m>k,$ we have
\begin{align*}
\begin{split}
&\eta\sum_{e_{jl}\subset\partial  K_{mk}\atop j>l}\int_e\frac{W_e}{h_e}|[R_Hu]|^2ds\\ \lesssim &\eta\sum_{e_{jl}\subset\partial  K_{mk}\atop j>l}\sum_{x_p\in\mathcal{N}^H(K_m)\atop x_p\in\overline{\partial K_{mk}}}\int_{e_{jl}}\frac{W_e}{h_e}|[\overline{u}_p\phi_p]|^2ds\lesssim\eta\beta(\alpha)\sum_{x_p\in \mathcal{N}^H(K_m)\atop x_p\in\overline{\partial K_{mk}}}|\overline{u}_p|^2\\
\lesssim& \eta\beta^I(\alpha)\sum_{x_p\in \mathcal{N}^H(K_m)\atop x_p\in\overline{\partial K_{mk}}}\frac{1}{|\omega_p|}||u||_{0,\omega_p}^2\lesssim \eta\beta^I(\alpha)\frac{1}{|K_m|}||u||_{0,\omega_{K_m}}^2.
\end{split}
\end{align*}
\indent Taking $\widehat{u}=u-\frac{1}{|\omega_{K_m}|}\int_{\omega_{K_m}} u dx$ instead of $u$ in the above equation, it follows from the Poincar\'{e} inequality in Lemma \ref{lemma:p} that
\begin{align*}
\eta\sum_{e_{jl}\subset\partial K_{mk}\atop j>l}\int_{e_{jl}}\frac{W_e}{h_e}|[R_Hu]|^2ds=&\eta\sum_{e_{jl}\subset\partial  K_{mk}\atop j>l}\int_{e_{jl}}\frac{W_e}{h_e}|[R_H\widehat{u}]|^2ds\\ \lesssim &\eta\beta^I(\alpha)\frac{1}{|K_m|}||\widehat{u}||_{0,\omega_{K_m}}^2\lesssim\eta\beta^I(\alpha)|u|_{1,h,\omega_{K_m}}^2.
\end{align*}
\indent Hence, we have
\begin{align}
\begin{split}
I_3=\eta\sum_{\partial K_{mk}\atop m>k}\sum_{e_{jl}\subset\partial K_{mk}}\int_{e_{jl}}\frac{W_e}{h_e}|[R_Hu]|^2ds\lesssim\eta\beta^I(\alpha)\sum_m|u|_{1,h,\omega_{K_m}}^2\lesssim\eta\beta^I(\alpha)a_{h}(u,u).
\end{split}
\label{eq:est3}
\end{align}
\indent Finally, for the terms on the boundary $\partial \Omega,$ we apply the same techniques, and we get
 \begin{align}
 |I_4|\lesssim I_1+I_5,
 \label{eq:est4}
 \end{align}
 and
\begin{align}
\begin{split}
I_5=\eta\sum_{\partial K\subset\partial\Omega}\sum_{e\subset \partial K}\int_e \frac{W_e}{h_e} (R_Hu)_e(R_Hu)_eds
\lesssim&\eta\beta^B(\alpha)\sum_m|u|_{1,h,\omega_{K_m}}^2\\ \lesssim&\eta\beta^B(\alpha)a_{h}(u,u).
\end{split}
\label{eq:est5}
\end{align}
\indent The stability estimate for the general case thus follows from (\ref{eq:est1}), (\ref{eq:est2}), (\ref{eq:est3}), (\ref{eq:est4}) and (\ref{eq:est5}) above. The estimate for the particular case holds naturally.
\end{proof}
\indent  We can now give an explicit bound for the condition number of our two level nonoverlapping additive Schwarz method.
\begin{thm}
For all $u\in V^h$, there exists $u_0\in V_0$ and $u_i\in V_i^h$, $1\leq i\leq N,$ such that
$$\sum_{i=0}^Na_{i}(u_i,u_i)\leq C\lambda a_{h}(u,u).$$
Consequently, we have $\kappa(T)\lesssim \lambda,$ where $\kappa(T)$ denotes the condition number of the additive Schwarz operator $T$ as defined in (\ref{eq:ddm}), and $\lambda$ is given as
$$\lambda=\eta\max_{\partial K_{mk}\atop m>k} \max_{e\subset \partial K_{mk}}W_e\gamma(1)\frac{H}{h}+\max\big{(}\gamma(\alpha), \eta\beta(\alpha)\big{)},$$ where $C$ is a constant independent of $h,$ $H,$ and $\alpha.$
\label{thm:nonoverlapping}
\end{thm}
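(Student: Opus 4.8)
The plan is to verify the two hypotheses of the abstract additive Schwarz theory \cite{T.W.}: a stable splitting controls $\lambda_{\min}(T)$ from below, while a finite-colouring argument controls $\lambda_{\max}(T)$ from above. Since $a_i(\cdot,\cdot)=a_h(\cdot,\cdot)$ on $V_i^h\times V_i^h$, the local solvers are exact ($\omega=1$), and since a function in $V_i^h$ couples through $a_h$ only with functions supported in neighbouring subdomains, the local spaces admit a colouring with a bounded number $N_c$ of colours; hence $\lambda_{\max}(T)\le N_c+1=O(1)$, independent of $h$, $H$, and $\alpha$. The entire content of the theorem therefore reduces to producing, for each $u\in V^h$, a decomposition $u=\sum_{i=0}^N u_i$ with $u_0\in V_0$, $u_i\in V_i^h$, and $\sum_{i=0}^N a_i(u_i,u_i)\lesssim \lambda\, a_h(u,u)$; the inequality $\kappa(T)\lesssim\lambda$ then follows at once.

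For the coarse component I would take $u_0=R_Hu$, the quasi-interpolant of Lemma \ref{lemma:approx}; its stability bound $a_0(R_Hu,R_Hu)\lesssim \max(\gamma(\alpha),\eta\beta(\alpha))\,a_h(u,u)$ is exactly the second term of $\lambda$. For the local components I would use that, because $V^h$ is a \emph{direct} sum of the $\{V_i^h\}$ in the nonoverlapping DG setting, the fine residual $w:=u-R_Hu$ splits uniquely as $w=\sum_{i=1}^N u_i$ with $u_i:=w|_{\Omega_i}$ extended by zero (this is legitimate precisely because $V^h$ carries no interelement continuity). By the continuity half of Lemma \ref{lemma:coer} applied to $a_i=a_h|_{V_i^h}$, one has $a_i(u_i,u_i)\lesssim \|u_i\|_{1,h,\alpha,\Omega_i}^2$, so it remains to bound $\sum_i\|u_i\|_{1,h,\alpha,\Omega_i}^2$. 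I would split this DG norm into (a) the interior part --- volume energy, interior-edge jumps, and the $\partial\Omega$-penalty --- and (b) the penalty across the interfaces $\Gamma_i=\partial\Omega_i\backslash\partial\Omega$. Part (a) is bounded by the global quantity $\|w\|_{1,h,\alpha,\Omega}^2\lesssim a_h(u,u)+a_0(R_Hu,R_Hu)$, which again yields the second term of $\lambda$.

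The crux is part (b), the interface penalty $\eta\sum_i\sum_{e\subset\Gamma_i}\tfrac{W_e}{h_e}\|(u_i)_e\|_{0,e}^2$, which is the sole source of the first term of $\lambda$. On each interface I would pull out $\max_{e\subset\Gamma_i}W_e$ and apply the subdomain-scale trace inequality of Lemma \ref{lemma:p}, namely $\|u_i\|_{0,\Gamma_i}^2\lesssim H^{-1}\|u_i\|_{0,\Omega_i}^2+H\,|u_i|_{1,h,\Omega_i}^2$. The first resulting term is controlled by the approximation property $\|u_i\|_{0,\Omega_i}^2=\|u-R_Hu\|_{0,K}^2\lesssim H^2|u|_{1,h,\omega_K}^2$ of Lemma \ref{lemma:approx}; the second, after writing $u_i=u-R_Hu$, uses the \emph{unweighted} energy bound $|R_Hu|_{1,K}^2\lesssim\gamma(1)\,|u|_{1,h,\omega_K}^2$, i.e.\ the $\alpha\equiv1$ analogue of the estimate of $I_1$ in Lemma \ref{lemma:approx}. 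Both contributions collapse to $\tfrac{\eta\max W_e\,H}{h}\,\gamma(1)\sum_K|u|_{1,h,\omega_K}^2$, and since the patches $\omega_K$ overlap finitely and $\alpha\ge\alpha_0>0$ gives $\sum_K|u|_{1,h,\omega_K}^2\lesssim a_h(u,u)$, this is precisely the first term of $\lambda$.

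The main obstacle is exactly this interface term, and two points require care. First is the weighted/unweighted bookkeeping: one must extract the edge weight $W_e$ \emph{before} invoking the trace and energy estimates, so that those run in the plain broken $H^1$ seminorm --- this is what forces the geometric indicator $\gamma(1)$ rather than $\gamma(\alpha)$ and isolates the entire coefficient dependence of this term into the single factor $\max_{e\subset\partial K_{mk}}W_e$ (so the bound degrades only when a high-conductivity channel crosses an interface). Second, producing the correct power $H/h$ requires the subdomain-scale trace inequality of Lemma \ref{lemma:p} coupled with the $O(H^2)$ approximation estimate, rather than a crude elementwise trace bound, which would lose this scaling. Once part (b) is in hand, summing (a), (b), and the coarse bound gives the stable-splitting constant $C\lambda$, and combined with $\lambda_{\max}(T)=O(1)$ the abstract theory delivers $\kappa(T)\lesssim\lambda$.
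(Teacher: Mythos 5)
Your proposal is correct and follows essentially the same route as the paper's proof: the same splitting $u_0=R_Hu$, $u_i=(u-R_Hu)|_{\Omega_i}$, the same verification of the three Schwarz assumptions (colouring/Gershgorin for $\rho(M)$, exact local solvers giving $\omega=1$), and the same crux estimate of the one-sided interface penalties via the trace inequality of Lemma \ref{lemma:p}, the $O(H^2)$ approximation property and the unweighted $\gamma(1)$-stability from Lemma \ref{lemma:approx}, which is exactly where the term $\eta\max W_e\,\gamma(1)\frac{H}{h}$ arises in the paper as well. The only notable difference is organizational --- the paper reduces to the interface terms through the exact identity $a_{h}(z,z)=\sum_{i}a_{i}(u_i,u_i)+I(z,z)$ with the interface form (\ref{eq:interface}) and bounds $|I(z,z)|$ term by term, while you invoke continuity of the local forms and split the local DG norms --- and the one point to watch is that, to obtain $\gamma(1)$ rather than $\max\big{(}\gamma(1),\beta(1)\big{)}$, the trace inequality must be applied on the individual coarse elements abutting $\Gamma_i$ (where $R_Hu$ is continuous, as the paper does and as your own formulas written with $K$ implicitly do), not on a multi-element subdomain $\Omega_i$ whose interior coarse edges carry jumps of $R_Hu$.
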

\begin{proof}
We need to verify the three assumptions of the Schwarz framework \cite{S.B.G.,T.W.}.
More precisely, we need to estimate the three constants $C_0^2$, $\rho(M)$ and $\omega$ which corresponds to the three assumptions. The bound
of the condition number is then given as:
\[
\kappa (T) \leq C_0^2 \omega (\rho(M) +1). \\[\baselineskip]
\]
{\bf The first assumption} of the Schwarz framework, asks for an estimate of the smallest $C_0^2$ such that
\[
\sum_{i=0}^Na_{i}(u_i,u_i)\lesssim C_0^2a_{h}(u,u).
\]

\indent For all $u\in V^h,$ let $u_0=R_H u,$ $z=u-u_0$ and $u_i=z|_{\Omega_i}.$ From (\ref{eq:relation}), we know that
$$a_{h}(u-u_0,u-u_0)=\sum_{i=1}^Na_{i}(u_i,u_i)+I(u-u_0,u-u_0).$$
Hence,
\begin{align}
\begin{split}
|\sum_{i=1}^Na_{i}(u_i,u_i)|&\leq|a_{h}(u-u_0,u-u_0)-I(u-u_0,u-u_0)|\\
&\leq |a_{h}(u-u_0,u-u_0)|+|I(u-u_0,u-u_0)|=I_1+I_2.\label{eq:te}
\end{split}
\end{align}
\indent Using the fact that $a_{h}(u_0,u_0)=a_{0}(u_0,u_0)$ and Lemma \ref{lemma:approx}, it then follows
\begin{align}
I_1=|a_{h}(u-u_0,u-u_0)|\lesssim a_{h}(u,u)+a_{0}(u_0,u_0)\lesssim \max\big{(}\gamma(\alpha),\eta\beta(\alpha)\big{)}a_{h}(u,u).\label{eq:estI1}
\end{align}
\indent Next, we estimate the term $I_2.$
\begin{align}
\begin{split}
I_2=|I(z,z)|\lesssim&\sum_{\partial K_{mk}\atop m>k}\sum_{e_{jl}\subset\partial K_{mk}\atop j>l}|\int_{ e_{jl}}
\alpha_jw_j\nabla z_j\cdot z_l\mathbf{n}_lds|\\
&+\sum_{\partial K_{mk}\atop m>k}\sum_{e_{jl}\subset\partial K_{mk}\atop j>l}|\int_{e_{jl}}
\alpha_lw_l\nabla z_l\cdot z_j\mathbf{n}_jds|\\
&+\eta\sum_{\partial K_{mk}\atop m>k}\sum_{e_{jl}\subset\partial K_{mk}\atop j>l}|\int_{ e_{jl}}\frac{W_e}{h_e}z_jz_lds |=I_{21}+I_{22}+I_{23}.\label{eq:estI2}
\end{split}
\end{align}
\indent To estimate the terms $I_{21}$ and $I_{22},$ we use similar techniques as in Lemma \ref{lemma:approx}, which gives
\begin{align}
\begin{split}
I_{21}
\lesssim &\sum_K||\alpha^{1/2}\nabla z||_{0,K}^2+\sum_{\partial K_{mk}\atop m>k}\sum_{e_{jl}\subset\partial K_{mk}\atop j>l}\frac{W_e}{h_e}(\int_{e_{jl}}|z_l|^2ds)=I_{211}+I_{212},\label{eq:estI21}
\end{split}
\end{align}
and
%\indent For the term $I_{22},$ following the same techniques above as in $I_{21}$, we get
\begin{align}
I_{22}\lesssim &\sum_K||\alpha^{1/2}\nabla z||_{0,K}^2+\sum_{\partial K_{mk}\atop m>k}\sum_{e_{jl}\subset\partial K_{mk}\atop j>l}\frac{W_e}{h_e}(\int_{e_{jl}}|z_j|^2ds)=I_{221}+I_{222}. \label{eq:estI22}
\end{align}
\indent Using the Cauchy-Schwarz inequality for the term $I_{23},$ we have
\begin{align}
\begin{split}
I_{23}=&\eta\sum_{\partial K_{mk}\atop m>k}\sum_{e_{jl}\subset\partial K_{mk}\atop j>l}|\int_{ e_{jl}}\frac{W_e}{h_e}z_jz_lds \}| \\
\lesssim&\eta\sum_{\partial K_{mk}\atop m>k}\sum_{e_{jl}\subset\partial K_{mk}\atop j>l}(\int_{ e_{jl}}\frac{W_e}{h_e}|z_j|^2ds)^{1/2}(\int_{ e_{jl}}\frac{W_e}{h_e}|z_l|^2ds)^{1/2}\\
\lesssim&\eta \sum_{\partial K_{mk}\atop m>k}\sum_{e_{jl}\subset\partial K_{mk}\atop j>l}\int_{ e_{jl}}\frac{W_e}{h_e}|z_j|^2ds+\eta\sum_{\partial K_{mk}\atop m>k}\sum_{e_{jl}\subset\partial K_{mk}\atop j>l}\int_{ e_{jl}}\frac{W_e}{h_e}|z_l|^2ds\\
=&I_{231}+I_{232}. \label{eq:estI23}
\end{split}
\end{align}
\indent So, we need to estimate the six terms $I_{211},$ $I_{212},$ $I_{221},$ $I_{222},$ $I_{231},$ $I_{232}.$ We begin by estimating the terms $I_{211}$ and $I_{221}.$ We note that
\begin{align}
\begin{split}
|u-u_0|_{1,\alpha,K}^2
\lesssim&|u|_{1,\alpha,K}+\sum_{x_p\in\mathcal{N}^H(K)}(|\omega_p|^{-1}||u||_{0,\omega_p}^2)|\phi_{p,K}|_{1,\alpha,K}^2\\
\lesssim&|u|_{1,\alpha,K}+\gamma(\alpha)\frac{1}{|K|}||u||_{0,\omega_K}^2.  \label{eq:westu-u0}
\end{split}
\end{align}
\indent Let $\widehat{u}=u-\frac{1}{|\omega_K|}\int_{\omega_K} u dx$ and $\widehat{u}_0=R_H\widehat{u}.$ Also note that $u-u_0=\widehat{u}-\widehat{u}_0$ since $R_H$ preserves constants. It follows from (\ref{eq:westu-u0}) and Lemma \ref{lemma:p} that
\begin{align*}
|u-u_0|_{1,\alpha,K}^2=|\widehat{u}-\widehat{u}_0|_{1,\alpha,K}^2\lesssim|u|_{1,\alpha,K}^2+\gamma(\alpha)\frac{1}{|K|}||\widehat{u}||_{0,\omega_K}^2\lesssim\gamma(\alpha)|u|_{1,h,\alpha,\omega_K}^2.
\end{align*}
\indent We then have
\begin{align*}
I_{211}=I_{221}=\sum_K|u-u_0|_{1,\alpha,K}^2\lesssim\gamma(\alpha)|u|_{1,h,\alpha,\omega_K}^2\lesssim\gamma(\alpha)a_h(u,u).
\end{align*}
\indent Next, we estimate the term $I_{232}.$ Estimates for the terms $I_{212},$ $I_{222}$ and $I_{231}$ are similar.
\begin{align}
\begin{split}
I_{232}\lesssim &\eta\sum_{\partial K_{mk}\atop m>k}\max_{e_{jl}\subset\partial K_{mk}\atop j>l} \frac{W_e}{h_e}\sum_{e_{jl}\subset\partial K_{mk}\atop j>l}\int_{ e_{jl}}|z_l|^2ds\\
\lesssim &\eta\sum_{\partial K_{mk}\atop m>k}\max_{e_{jl}\subset\partial K_{mk}\atop j>l} \frac{W_e}{h_e}(||z_m||_{0,\partial K_{m}}^2+||z_k||_{0,\partial K_{k}}^2).\label{eq:I232}
\end{split}
\end{align}
\indent Using the trace inequality in Lemma \ref{lemma:p}, we have
\begin{align}
\begin{split}
I_{232}\lesssim &\eta\sum_{\partial K_{mk}\atop m>k}\max_{e_{jl}\subset\partial K_{mk}\atop j>l} \frac{W_e}{h_e}(H^{-1}||z_m||_{0,K_m}^2+H|z_m|_{1,h,K_m}^2)\\
&+\eta\sum_{\partial K_{mk}\atop m>k}\max_{e_{jl}\subset\partial K_{mk}\atop j>l} \frac{W_e}{h_e}(H^{-1}||z_k||_{0,K_k}^2+H|z_k|_{1,h,K_k}^2)\\
\lesssim &\eta\sum_{\partial K_{mk}\atop m>k}\max_{e_{jl}\subset\partial K_{mk}\atop j>l} \frac{W_e}{h_e}(H^{-1}||u-u_0||_{0,K_m}^2+H|u-u_0|_{1,h,K_m}^2)\\
&+\eta\sum_{\partial K_{mk}\atop m>k}\max_{e_{jl}\subset\partial K_{mk}\atop j>l} \frac{W_e}{h_e}(H^{-1}||u-u_0||_{0,K_k}^2+H|u-u_0|_{1,h,K_k}^2).\\
\label{eq:estI232}
\end{split}
\end{align}
\indent We recall that $u_0$ is a continuous function on each $K\in\mathcal{T}_H,$  which implies that
\begin{align}
\begin{split}
|u-u_0|_{1,h,K}^2\lesssim &|u|_{1,h,K}^2+|u_0|_{1,h,K}^2\lesssim|u|_{1,h,K}^2+|u_0|_{1,K}^2\\
\lesssim&|u|_{1,h,K}+\sum_{x_p\in\mathcal{N}^H(K)}(|\omega_p|^{-1}||u||_{0,\omega_p}^2)|\phi_{p,K}|_{1,K}^2\\
\lesssim&|u|_{1,h,K}+\gamma(1)\frac{1}{|K|}||u||_{0,\omega_K}^2.  \label{eq:estu-u0}
\end{split}
\end{align}
\indent Again let $\widehat{u}=u-\frac{1}{|\omega_K|}\int_{\omega_K} u dx$ and $\widehat{u}_0=R_H\widehat{u},$ and note that $u-u_0=\widehat{u}-\widehat{u}_0$ ($R_H$ preserves constants),  we have from (\ref{eq:estu-u0}) that
\begin{align}
|u-u_0|_{1,h,K}^2=|\widehat{u}-\widehat{u}_0|_{1,h,K}^2\lesssim|u|_{1,h,K}^2+\gamma(1)\frac{1}{|K|}||\widehat{u}||_{0,\omega_K}^2\lesssim\gamma(1)|u|_{1,h,\omega_K}.
\label{eq:festu-u0}
\end{align}
\indent Using the approximation property of $R_H$ in Lemma \ref{lemma:approx}, as well as (\ref{eq:festu-u0}), it follows from (\ref{eq:estI232}) that
\begin{align*}
\begin{split}
I_{232}\lesssim &\eta \sum_{\partial K_{mk}\atop m>k}\max_{e_{jl}\subset\partial K_{mk}\atop j>l}
\frac{W_e}{h_e}(H|u|_{1,h,\omega_{K_m}}^2+H\gamma(1)|u|_{1,h,\omega_{K_m}}^2)\\
&+\eta \sum_{\partial K_{mk}\atop m>k}\max_{e_{jl}\subset\partial K_{mk}\atop j>l}
\frac{W_e}{h_e}(H|u|_{1,h,\omega_{K_k}}^2+H\gamma(1)|u|_{1,h,\omega_{K_k}}^2)\\
\lesssim &\eta\max_{\partial K_{mk}\atop m>k} \max_{e_{jl}\subset \partial K_{mk}\atop j>l}W_e\gamma(1)\frac{H}{h}a_{h}(u,u).
\end{split}
\end{align*}
\indent Having all the estimates of the terms $I_{21},$ $I_{22}$ and $I_{23}$ together in (\ref{eq:estI2}), we get
\begin{align*}
I_2\lesssim\eta\max_{\partial K_{mk}\atop m>k} \max_{e_{jl}\subset \partial K_{mk}\atop j>l}W_e\gamma(1)\frac{H}{h}a_{h}(u,u)+\gamma(\alpha)a_{h}(u,u).
\end{align*}
\indent Combing (\ref{eq:te}), the estimates for $I_1$ and $I_2,$ and Lemma \ref{lemma:approx} we have
$$\sum_{i=0}^Na_{i}(u_i,u_i)\lesssim \Big{(} \eta\max_{\partial K_{mk}\atop m>k}
\max_{e_{jl}\subset \partial K_{mk}\atop j>l}W_e\gamma(1)\frac{H}{h}+\max\big{(}\gamma(\alpha),\eta\beta(\alpha)\big{)}\Big{)}a_{h}(u,u).$$ \\
{\bf The second assumption} of the Schwarz framework, requires a bound for the spectral
radius $\rho(M)$ of the $N\times N$ matrix $M,$ whose elements $M_{ij}$ are defined in terms of a strengthen Cauchy Schwarz inequality:
let $0\leq M_{ij}\leq 1$ be the minimum values such that
\[
|a_{h}(u_i,u_j)|\leq M_{ij}a_{h}(u_i,u_i)^{\frac{1}{2}}a_{h}(u_j,u_j)^{\frac{1}{2}},
\]
where $u_i\in V_i^h,\,\,u_j\in V_j^h,\,\,1\leq i,j\leq N.$ \\[\baselineskip]
\indent Note that in our definitions above
$$|a_{h}(u_i,u_j)|=0, \,\,{\rm if}\,\, {\rm meas}(\partial \Omega_{ij})=0,\,\,1\leq i,j\leq N,$$
where $\partial \Omega_{ij}=\partial \Omega_i\cap\partial \Omega_j.$ This is because, according to (\ref{eq:bf}), all the terms become zero since functions $u_i$ and $u_j$ have no common support.
For the remaining case we can take $M_{ij}= 1.$ It follows at once from Gershgorin's circle
theorem that
$$\rho(M)\leq N_c+1,$$ where $N_c$ is the maximum number of subdomains adjacent to any subdomain.\\[\baselineskip]
{\bf The third assumption} asks for $\omega$ such that
$$a_{h}(u_i,u_i)\leq\omega a_{i}(u_i,u_i),\,\,\forall u_i\in V_i^h,,\,\,0\leq i\leq N.$$
Since we use exact bilinear form for the subproblems, $\omega=1.$\\
\indent Our theorem is proved since the analysis of the three assumptions is complete.
\end{proof}

\subsubsection{\rm Overlapping additive Schwarz method}
In this section, we analyze the overlapping version of our additive Schwarz method.
As will be shown in Theorem \ref{thm:overlapping}, the condition number bound is not only dependent on
the indicators $\gamma(\alpha)$ and $\beta(\alpha)$ but also on the partition robustness
indicator $\pi(\alpha)$ borrowed from \cite{I.P.R.}, which describes the relationship between the subdomain overlap
and the coefficients $\alpha.$
We can control this indicator by choosing a suitable overlap.\\
\indent Given an overlapping partition $\{\Omega_i'\}$ with $\delta_i>0,$ $i=1, \cdot\cdot\cdot, N,$ let $\{\chi_i\}_{i=1}^N$ be
the partition of unity subordinate to $\{\Omega_i'\},$ c.f., \cite{D.N.}, for which the following
property holds,
\begin{align}
\chi_i(x)=1\,\,{\rm and\,\,} \nabla \chi_i=0,\,\, {\rm for\,\, all\,\,} x\in \Omega_i'^o=\{x\in \Omega_i': x\not\in\overline{\Omega_j'}\,\,{\rm for\,\,any\,\,} j\neq i \}.\label{eq:pu}
\end{align}

\indent $\mathbf{Partition\,\, robustness\,\, indicator}$\,\,\cite{I.P.R.}. For a particular partition of unity $\{\chi_i \}$ subordinate to the covering $\{\Omega_i'\},$ let
$$\pi(\alpha, \{\chi_i\})=\max_{i=1}^N \{\delta_i^2\| \alpha |\nabla \chi_i|^2\|_{L_\infty(\Omega)} \},$$
then the partition robustness indicator is defined as
$$\pi(\alpha)=\inf_{\{\chi_i\}\in\Pi(\{\Omega_i'\})}\pi(\alpha,\{\chi_i\}),$$
where $\Pi(\{\Omega_i'\})$ denote the set of all the partitions of
unity $\{\chi_i\}$ subordinate to the cover $\{\Omega_i'\}$.\\[\baselineskip]

\indent Following similarly as the nonoverlapping case, we use $V_0$ as the coarse space with bilinear form $a_0(\cdot,\cdot)$, the subspaces $V_i^h$ and the local bilinear forms $a_{i}(\cdot,\cdot),\,\,1\leq i\leq N,$ can be got by taking $B_i=\Omega_i'$ in (\ref{eq:ls}) and (\ref{eq:lbf}) respectively.\\
\indent The next lemma gives us an estimate on the boundary layer.

\begin{lem}
Let $D$ be a convex domain with ${\rm diam}(D)= O(H),$ $\mathcal{T}_{D}$ be a
family of partitions over $D$, and $D=\bigcup_{\tau\in \mathcal{T}_{D}}\overline{\tau}$
with ${\rm diam}(\tau)= O(h).$ Let $0<\mu\leq H,$ then for any $u\in V_{D}=\Pi_{\tau\subset
\mathcal{T}_{D}}H^1(\tau),$ then we have
$$||u||_{0,{D}_\mu}^2\leq C[\mu H^{-1}||u||_{0,{D}}^2+\mu(\mu+H)|u|_{1,h,{D}}^2],$$
where ${D}_\mu=\{x\in {D}:{\rm dist}(x,\partial {D})\leq \mu\}$ denotes the
boundary layer of ${D}$ with width $\mu,$ and $|u|_{1,h,{D}}^2=\sum_{\tau\subset {D}}
||\nabla u||_{0,\tau}^2+\sum_{e_{jl}\subset {D}\atop j>l}h_{jl}^{-1}\int_{e_{jl}}[u]^2ds.$
\label{lemma:ble}
\end{lem}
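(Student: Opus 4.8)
The plan is to prove the layer estimate by a fiber (slicing) argument along the inward normals emanating from $\partial D$, reducing the area integral over the thin layer $D_\mu$ to a one dimensional fundamental theorem of calculus along each normal line and then integrating the resulting pointwise bounds over the bundle of such lines. Because $D$ is convex with $\mathrm{diam}(D)=O(H)$ and $\mu\le H$, the map $(\sigma,t)\mapsto \sigma-t\mathbf{n}(\sigma)$, with $\sigma\in\partial D$ and $0\le t\le H$, is a change of coordinates whose Jacobian is bounded above and below by constants depending only on shape regularity; this furnishes a coarea-type identity $\|u\|_{0,D_\mu}^2\approx\int_{\partial D}\int_0^\mu u(\sigma-t\mathbf{n})^2\,dt\,d\sigma$ and, more importantly, lets me follow the deeper part of $D$ along the same fibers.

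For a fixed fiber and a point at depth $t\le\mu$, I would express the value of $u$ at depth $t$ in terms of its value at a reference depth $s$ lying well inside $D$, via the broken fundamental theorem of calculus: moving inward from depth $t$ to depth $s$, the difference of $u$ equals the integral of the normal derivative over the portions of the fiber interior to the elements it crosses, plus the sum of the jumps $[u]$ incurred at the element edges $e_{jl}$ that the fiber crosses. Squaring, using Cauchy--Schwarz, and then averaging the reference depth $s$ over an interval of length $\sim H$ placed deep inside $D$ produces three contributions: an $L^2(D)$ term carrying the factor $\mu H^{-1}$ (the $\mu$ from integrating $t$ over $[0,\mu]$ and the $H^{-1}$ from averaging the reference value), a volume gradient term carrying the factor $\mu H$ (the normal derivative being integrated over a fiber of length $\lesssim H$), and a jump term treated next. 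The first two already give $\mu H^{-1}\|u\|_{0,D}^2+\mu H\sum_{\tau\subset D}\|\nabla u\|_{0,\tau}^2$, which is dominated by the asserted right hand side since $\mu H\le\mu(\mu+H)$.

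The jump contribution is where the discontinuous Galerkin setting departs from the classical layer estimate, and it is the step I expect to be the main obstacle. Along a single fiber the number of edges crossed before reaching depth $H$ is $\lesssim H/h$ by shape regularity, so Cauchy--Schwarz on the sum of the crossed jumps costs a factor $H/h$ and leaves $\sum_{\text{crossed } e}|[u](p_e)|^2$. Integrating this over the fiber bundle and applying Fubini, each interior edge $e_{jl}$ is met by the fibers issuing from a boundary strip of measure comparable to its length, and the change of variables converts $\int_{\partial D}|[u](p_e(\sigma))|^2\,d\sigma$ into $\|[u]\|_{0,e_{jl}}^2$ up to a bounded Jacobian factor. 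Combining the factor $\mu$ from the $t$ integration with the factor $H/h$ yields $\mu(H/h)\sum_{e_{jl}\subset D}\|[u]\|_{0,e_{jl}}^2=\mu H\sum_{e_{jl}\subset D}h^{-1}\|[u]\|_{0,e_{jl}}^2$, which is exactly $\mu H$ times the jump part of $|u|_{1,h,D}^2$. Added to the gradient part, this completes the bound $\mu H|u|_{1,h,D}^2\le\mu(\mu+H)|u|_{1,h,D}^2$.

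The delicate points to watch, beyond the jump counting, are the control of the Jacobian of the normal coordinate map and of the obliqueness of the fibers relative to the crossed edges: where a fiber meets an edge almost tangentially the conversion of the boundary strip measure into $\|[u]\|_{0,e_{jl}}^2$ could degenerate, and one must use shape regularity (and, if needed, a harmless perturbation of the normal directions) to keep all such Jacobians bounded. An alternative route that avoids the explicit jump bookkeeping is to apply the trace inequality of Lemma \ref{lemma:p} to the shrunken convex subdomains $D^{(t)}=\{x\in D:\mathrm{dist}(x,\partial D)\ge t\}$ and to integrate the resulting bound $\|u\|_{0,\partial D^{(t)}}^2\lesssim H^{-1}\|u\|_{0,D}^2+H|u|_{1,h,D}^2$ over $t\in[0,\mu]$; this is cleaner, but it requires a version of that trace inequality on the level surfaces $\partial D^{(t)}$, which in general cut through the fine elements rather than align with $\mathcal{T}_D$, so some care with the interior traces is still needed.
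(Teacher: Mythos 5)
First, a framing remark: the paper does not prove this lemma at all; it simply cites Feng--Karakashian \cite{X.O.} (where the argument runs through a conforming approximation of the broken function, with the remainder controlled by the jump terms, so that the classical boundary-layer estimate can be applied to the continuous part). Your attempt is therefore a from-scratch proof, and it must be judged on its own merits.

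Most of your slicing machinery is sound for the relevant (polygonal, shape-regular) convex domains: the upper bound $\|u\|_{0,D_\mu}^2\lesssim\int_{\partial D}\int_0^\mu u(\sigma-t\mathbf{n})^2\,dt\,d\sigma$ holds because the inward normal map of a convex set is volume-contracting (though your claimed two-sided ``$\approx$'' fails past the medial axis, only the upper bound is needed); the broken fundamental theorem of calculus on a.e.\ fiber is legitimate by the ACL characterization of $H^1$; the $H/h$ crossing count is right; and the tangential-crossing degeneracy you worry about actually works in your favor, since $d\sigma=\sin\theta\,dp\le dp$, so the bundle of fibers meeting an edge obliquely has small $\sigma$-measure and the conversion to $\|[u]\|_{0,e_{jl}}^2$ never blows up. The genuine gap is elsewhere: your comparison requires every fiber issuing from a boundary point to penetrate to depth $\sim H$, so that the reference depth $s$ can be averaged over an interval of length $\sim H$ (this averaging is precisely what produces the coefficient $\mu H^{-1}$ in front of $\|u\|_{0,D}^2$). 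For a convex polygon this is false near corners: in a triangle, the inward normal ray from a boundary point at distance $\varepsilon$ from a vertex exits through the adjacent edge after length $O(\varepsilon)$, not $O(H)$. So for all fibers in a neighborhood of each vertex the reference interval cannot be placed, and no choice of adaptive (shorter) reference interval recovers the factor $H^{-1}$ --- it degrades to $\ell(\sigma)^{-1}\gg H^{-1}$ on exactly those fibers. Since the domains $D$ in this paper are coarse triangles and polygonal subdomains, corners are unavoidable and the argument as written does not close.

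The fix is standard and keeps your structure intact: convexity plus shape regularity imply $D$ is star-shaped with respect to a ball $B$ of radius $\sim H$ centered well inside $D$; replace the normal fibers by segments joining boundary points to points of $B$, and average the reference point over $B$ rather than over a depth interval on the same normal line. Every such segment has length $\sim H$ and meets $D_\mu$ in a subsegment of length $O(\mu)$ (uniformly in the direction, by the cone property), after which your three contributions --- the averaged $L^2$ term with factor $\mu H^{-1}$, the gradient term with factor $\mu H$, and the jump term with factor $\mu\cdot(H/h)\cdot\sum_e\|[u]\|_{0,e}^2=\mu H\sum_e h^{-1}\|[u]\|_{0,e}^2$ --- come out exactly as you computed, and $\mu H\le\mu(\mu+H)$ finishes the proof. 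Your alternative route (integrating the trace inequality of Lemma \ref{lemma:p} over the level sets $\partial D^{(t)}$) suffers from the defect you yourself identify: those level curves are not mesh-aligned, and Lemma \ref{lemma:p} does not apply to them, so it is not a shortcut without proving a new interior-trace inequality.
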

The proof of this lemma can be found in \cite{X.O.}, again the assumption on the convexity of the domain $D$ can be dropped, c.f., \cite{X.O.}.\\

\begin{thm}
For all $u\in V^h$, there exists $u_0\in V_0$ and $u_i\in V_i^h$, $1\leq i\leq N,$ such that
$$\sum_{i=0}^Na_{i}(u_i,u_i)\leq C\lambda a_{h}(u,u),$$
Consequently, we have $\kappa(T)\lesssim \lambda,$ where $\kappa(T)$ denotes the condition number of the additive Schwarz operator $T$ as defined in (\ref{eq:ddm}), and $\lambda$ is given as
$$\lambda=\pi(\alpha)\max\big{(}\gamma(1),\beta(1)\big{)}\max_i\frac{H_i}{\delta_i}+\max\big{(}\gamma(\alpha), \eta\beta(\alpha)\big{)},$$ and $C$ is a constant independent of $h,$ $H,$ and $\alpha.$\label{thm:overlapping}
\end{thm}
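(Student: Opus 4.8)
The plan is to verify the three assumptions of the abstract Schwarz framework, exactly as in the nonoverlapping case (Theorem \ref{thm:nonoverlapping}), since the condition number bound is $\kappa(T)\leq C_0^2\omega(\rho(M)+1)$. The second and third assumptions are the easy ones and I would dispose of them first. For the third assumption, the local forms $a_i(\cdot,\cdot)$ are the exact restrictions of $a_h(\cdot,\cdot)$ to $V_i^h$, so $\omega=1$. For the second assumption, I would invoke a coloring argument: because the overlapping subdomains $\{\Omega_i'\}$ have bounded overlap (each point lies in at most $N_c$ of them by shape regularity), a given $\Omega_i'$ intersects only $O(N_c)$ others, so by Gershgorin $\rho(M)\lesssim N_c+1$, a constant independent of $h$, $H$, and $\alpha$.

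The heart of the proof, and the main obstacle, is the first (stable-decomposition) assumption: given $u\in V^h$ I must produce a decomposition $u=u_0+\sum_{i=1}^N u_i$ with $\sum_{i=0}^N a_i(u_i,u_i)\lesssim\lambda\,a_h(u,u)$. The natural choice is $u_0=R_H u$ as in Lemma \ref{lemma:approx}, and then $w=u-u_0$ should be split among the subdomains using the partition of unity $\{\chi_i\}$ from (\ref{eq:pu}): set $u_i=I^h(\chi_i w)$, where $I^h$ is the nodal interpolant onto $V^h$ (needed because $\chi_i w$ is not piecewise linear). The coarse term is handled immediately by the stability estimate in Lemma \ref{lemma:approx}, which already contributes $\max(\gamma(\alpha),\eta\beta(\alpha))\,a_h(u,u)$; this accounts for the second summand in $\lambda$. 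Thus everything reduces to bounding $\sum_{i=1}^N a_i(u_i,u_i)$, and showing it is controlled by $\pi(\alpha)\max(\gamma(1),\beta(1))\max_i(H_i/\delta_i)\,a_h(u,u)$.

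To control $\sum_i a_i(u_i,u_i)$ I would expand $a_i(u_i,u_i)$ into its three kinds of terms (the bulk weighted energy $\int_\tau\alpha|\nabla u_i|^2$, the jump penalty terms $\frac{W_e}{h_e}\|[u_i]\|_{0,e}^2$, and the consistency flux terms). The flux terms are absorbed into the other two by the same weighted Cauchy--Schwarz manipulation used for $I_2$ in Lemma \ref{lemma:approx} (inequality (\ref{eq:est2})), so the real work is the bulk and penalty terms for $u_i=I^h(\chi_i w)$. On each fine element I use the product rule $\nabla(\chi_i w)=\chi_i\nabla w+w\nabla\chi_i$: the first piece is bounded by the finite-overlap property, while the second piece produces a factor $|\nabla\chi_i|^2$ concentrated in the boundary layer $\Omega_i'\setminus\Omega_i'^o$ of width $\delta_i$. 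Here is where the indicators enter. The factor $\alpha|\nabla\chi_i|^2$ is estimated by $\pi(\alpha)/\delta_i^2$ from the definition of the partition robustness indicator, and the resulting $\|w\|_{0,\,\text{layer}}^2$ is estimated by the boundary-layer Lemma \ref{lemma:ble} with $\mu=\delta_i$, yielding the $\delta_i H^{-1}\|w\|_{0,D}^2+\delta_i(\delta_i+H)|w|_{1,h,D}^2$ combination. Multiplying by $\pi(\alpha)/\delta_i^2$ and using $\delta_i\leq H_i$ produces exactly the factor $\pi(\alpha)\max_i(H_i/\delta_i)$. Finally, both $\|w\|_{0,D}^2$ and $|w|_{1,h,D}^2=|u-R_Hu|_{1,h,D}^2$ are controlled by $a_h(u,u)$ using the approximation estimate of Lemma \ref{lemma:approx} together with the $\gamma(1)$-bound (\ref{eq:festu-u0}); the penalty terms for $u_i$ are handled identically and bring in $\beta(1)$, which is why the combined constant is $\max(\gamma(1),\beta(1))$.

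The step I expect to be genuinely delicate, and the main obstacle, is the treatment of the DG jump terms $\frac{W_e}{h_e}\|[u_i]\|_{0,e}^2$ for $u_i=I^h(\chi_i w)$ across fine edges interior to $\Omega_i'$. Unlike the conforming case, $[u_i]$ need not vanish, and on edges straddling the boundary layer the truncation by $\chi_i$ can create or amplify jumps; I would need a local trace/inverse estimate to pass from these edge integrals back to element norms of $w$ and $\nabla w$, and then reuse the $\pi(\alpha)$ and layer arguments above, absorbing the weight $W_e$ with the same harmonic-average inequalities (\ref{eq:weight-ineq}) used throughout. Provided the nodal interpolant $I^h$ is stable in the broken $\|\cdot\|_{1,h,\alpha}$ norm on each element — which follows from shape regularity and the piecewise-linear structure — these jump contributions fold into the $\max(\gamma(1),\beta(1))$ term, and assembling all pieces gives $C_0^2\lesssim\pi(\alpha)\max(\gamma(1),\beta(1))\max_i(H_i/\delta_i)+\max(\gamma(\alpha),\eta\beta(\alpha))=\lambda$, completing the verification.
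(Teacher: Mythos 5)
Your overall route is the paper's route: the same decomposition $u_0=R_Hu$, $u_i=I_h(\chi_i(u-u_0))$, the coarse term disposed of by the stability part of Lemma~\ref{lemma:approx}, the bulk terms handled by isolating the $|\nabla\chi_i|^2$ contribution in the boundary layer and invoking the definition of $\pi(\alpha)$ together with Lemma~\ref{lemma:ble} with $\mu=\delta_i$ (the paper implements your ``product rule'' step rigorously via the element averages $\chi_{i,\tau}$, a Bramble--Hilbert bound and an inverse inequality, since one cannot differentiate through the nodal interpolant directly), and the same treatment of $\rho(M)$ and $\omega$. The genuine gap sits exactly at the step you yourself flag as the main obstacle, the penalty terms $\frac{W_e}{h_e}\|[u_i]\|_{0,e}^2$: you leave it unresolved and propose local trace/inverse estimates, motivated by the worry that truncation by $\chi_i$ ``can create or amplify jumps.'' That worry is unfounded, and the trace/inverse route is the wrong tool, because passing from edge integrals of the jump to element norms of $w$ and $\nabla w$ discards precisely the cancellation that makes these terms small. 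The paper's resolution, estimate (\ref{eq:estjump}), is a one-line observation: $\chi_i$ is continuous, hence single-valued on each interior fine edge, so by linearity $[I_h(\chi_i w)]=I_h(\chi_i w^j)-I_h(\chi_i w^l)=I_h(\chi_i[w])$ on $e_{jl}$, and $|\chi_i|\le 1$ together with the $L^\infty$-stability of $I_h$ gives $\|[u_i]\|_{0,e}\le\|[w]\|_{0,e}$. Multiplication by a continuous cutoff bounded by one can only shrink jumps; no inverse estimates, no layer argument, and no factor of $\pi(\alpha)$ are needed for these terms.

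Your bookkeeping for where these contributions land is also off. After summing over $i$ (finite overlap), the terms $\eta\sum\frac{W_e}{h_e}\|[w]\|_{0,e}^2$ are absorbed into $a_h(w,w)$, which by the stability of $R_H$ is bounded by $\max\big(\gamma(\alpha),\eta\beta(\alpha)\big)a_h(u,u)$, i.e.\ they feed the \emph{second} summand of $\lambda$, not the $\pi(\alpha)\max\big(\gamma(1),\beta(1)\big)$ term as you claim. The factor $\max\big(\gamma(1),\beta(1)\big)$ enters elsewhere: the boundary-layer estimate applied to the bulk term leaves $\pi(\alpha)\frac{H_i}{\delta_i}|w|_{1,h,\Omega_i'}^2$ in (\ref{eq:estI1_over}) and (\ref{eq:estonlbf}), and bounding $\sum_i|R_Hu|_{1,h,\Omega_i'}^2$ requires the unweighted case ($\alpha\equiv1$, $\eta=1$) of Lemma~\ref{lemma:approx}, whose constant is $\max\big(\gamma(1),\beta(1)\big)$. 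Your citation of (\ref{eq:festu-u0}) alone is insufficient for this, since $|w|_{1,h,\Omega_i'}$ contains jumps of $R_Hu$ across coarse-grid edges inside $\Omega_i'$, which is exactly what $\beta(1)$ controls. With (\ref{eq:estjump}) substituted for your unresolved step and the bookkeeping corrected, the rest of your outline assembles into the stated bound.
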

\begin{proof}
Again, we use the Schwarz framework to prove this theorem. Like in the nonoverlapping case, we estimate the three parameters $C_0^2$, $\rho(M)$ and $\omega$ .\\[\baselineskip]
\indent We first estimate $C_0^2.$ For all $u\in V^h$, we may choose $u_0=R_Hu$ and $u_i=I_h(\chi_i(u-u_0)),$ where $I_h$ is the usual Lagrange interpolation operator. Let $w=u-u_0$ and $\{\chi_i\}$ be the partition of unity subordinate to the covering $\{\Omega_i'\},$ then $u_i=I_h(\chi_iw).$\\
\indent For $1\leq i\leq N,$ since $\chi_i=0$ on $\Gamma_i=\partial \Omega_i'\backslash\partial\Omega,$ we know from (\ref{eq:lbf}) that
\begin{align*}
\begin{split}
a_i(u_i,u_i)=&\sum_{\tau\subset \Omega_i'}\int_{\tau}\alpha\nabla u_i\cdot\nabla u_idx-2\sum_{ e_{jl}\subset \Omega_i'\atop j>l}\int_{e_{jl}}\{\alpha \nabla u_i\}_w^e[u_i]ds\\
&+\eta\sum_{e_{jl}\subset \Omega_i'\atop j>l}\int_{e_{jl}} \frac{W_e}{h_e}[u_i][u_i]ds-2\sum_{ e\subset \Xi_i}\int_{e}w_e\alpha_e (\nabla u_i)_e\cdot(u_i)_e\mathbf{n}_eds\\
&+\eta\sum_{e\subset \Xi_i}\int_{e} \frac{W_e}{h_e}(u_i)_e(u_i)_eds.
\end{split}
\end{align*}
\indent Using the same techniques as in Lemma \ref{lemma:approx}, we can write
\begin{align*}
\sum_{ e_{jl}\subset \Omega_i'\atop j>l}\int_{e_{jl}}\{\alpha \nabla u_i\}_w^e[u_i]ds
\lesssim&\sum_{\tau\subset\Omega_i'}\int_{\tau}\alpha\nabla u_i\cdot\nabla u_i dx+\eta\sum_{e_{jl}\subset\Omega_i'\atop j>l}\int_{e_{jl}}\frac{W_e}{h_e}[u_i]^2ds,
\end{align*}
and
\begin{align*}
\sum_{ e\subset \Xi_i}\int_{e}w_e\alpha_e (\nabla u_i)_e\cdot(u_i)_e\mathbf{n}_eds
\lesssim&\sum_{\tau\subset\Omega_i'}\int_{\tau}\alpha\nabla u_i\cdot\nabla u_i dx+\eta\sum_{e\subset \Xi_i}\int_{e}\frac{W_e}{h_e}(u_i)_e^2ds.
\end{align*}
\indent Consequently,
\begin{align*}
\begin{split}
a_i(u_i,u_i)\lesssim&\sum_{\tau\subset\Omega_i'}\int_{\tau}\alpha\nabla u_i\cdot\nabla u_i dx+\eta\sum_{e_{jl}\subset\Omega_i'\atop j>l}\int_{e_{jl}}\frac{W_e}{h_e}[u_i]^2ds+\eta\sum_{e\subset \Xi_i}\int_{e}\frac{W_e}{h_e}(u_i)_e^2ds\\ =&I_1+I_2+I_3.
\end{split}
\end{align*}
\indent We begin by estimating term $I_1.$ Since $u_i=I_h(\chi_iw),$ we have
\begin{align*}
I_1=\sum_{\tau\subset\Omega_i'}\int_{\tau}\alpha\nabla u_i\cdot\nabla u_i dx=\sum_{\tau\subset\Omega_i'}\int_{\tau}\alpha|\nabla I_h(\chi_i w)|^2 dx.
\end{align*}
\indent Let $\chi_{i,\tau}=\frac{1}{|\tau|}\int_{\tau}\chi_idx,$ then it follows from the Bramble-Hilbert Lemma, c.f., \cite{D.N.}, that,
\[
||\chi_i-\chi_{i,\tau}||_{0,\infty,\tau}\lesssim
\begin{cases}h_{\tau}|\nabla \chi_i|,\quad \ \ & \forall \tau\subset\Omega_{i,\delta_i}',\\
0,\quad \ \ & \tau\subset \Omega_i'^o,\\
\end{cases}
\]
where $\Omega_{i,\delta_i}'=\{x\in\Omega_i': dist(x,\partial \Omega_i')\leq\delta_i\}$ denotes the boundary layer of $\Omega_i'$ with width $\delta_i,$ and $\Omega_i'^o=\Omega_i'\backslash \Omega_{i,\delta_i}'$ is the interior part of $\Omega_i'.$\\
\indent Since the interpolation operator $I_h$ is stable with respect to the norm $||\cdot||_{0,\infty,\Omega},$
using the inverse inequality, we have
\begin{align*}
\begin{split}
\int_{\tau}\alpha|\nabla I_h(\chi_i w)|^2dx=&||\alpha^{1/2}\nabla(I_h(\chi_i w))||_{0,\tau}^2\\
\lesssim&||\alpha^{1/2}\nabla(I_h(\chi_{i,\tau} w))||_{0,\tau}^2+||\alpha^{1/2}\nabla I_h((\chi_i-\chi_{i,\tau}) w)||_{0,\tau}^2\\
\lesssim&||\alpha^{1/2}\nabla w||_{0,\tau}^2+h_{\tau}^{-2}||\alpha^{1/2}I_h((\chi_i-\chi_{i,\tau}) w)||_{0,\tau}^2\\
\lesssim&||\alpha^{1/2}\nabla w||_{0,\tau}^2+||\alpha^{1/2}I_h((\chi_i-\chi_{i,\tau}) w)||_{0,\infty,\tau}^2\\
\lesssim&||\alpha^{1/2}\nabla w||_{0,\tau}^2+h_{\tau}^{-2}\alpha||\chi_i-\chi_{i,\tau} ||_{0,\infty,\tau}^2||w||_{0,\tau}^2,
\end{split}
\end{align*}
\indent which implies that
\[
\int_{\tau}\alpha|\nabla I_h(\chi_i w)|^2dx\lesssim||\alpha^{1/2}\nabla w||_{0,\tau}^2+
\begin{cases}\alpha|\nabla \chi_i|^2||w||_{0,\tau}^2,\quad \ \ & \forall \tau\subset\Omega_{i,\delta_i}',\\
0,\quad \ \ & \tau\subset \Omega_i'^o.\\
\end{cases}
\]
\indent Adding this estimate across all the fine elements $\tau\subset \Omega_i'$, we have
\begin{align*}
\begin{split}
I_1=\sum_{\tau\subset\Omega_i'}\int_{\tau}\alpha\nabla I_h(\chi_i w)\cdot\nabla I_h(\chi_i w)dx
\lesssim||\alpha|\nabla \chi_i|^2||_{0,\infty,\Omega}||w||_{0,\Omega_{i,\delta_i}'}^2+|w|_{1,\alpha,\Omega_i'}^2.
\end{split}
\end{align*}
\indent By the definition of $\pi(\alpha),$ and using the estimate in Lemma \ref{lemma:ble} with $\mu=\delta_i$ and $D=\Omega_i',$ we have
\begin{align}
\begin{split}
I_1&\lesssim \pi(\alpha)\frac{1}{\delta_i^2}||w||_{0,\Omega_{i,\delta_i}'}^2+|w|_{1,\alpha,\Omega_i'}^2\\
&\lesssim \pi(\alpha)\frac{1}{\delta_i^2}(\delta_i H_i^{-1}||w||_{0,\Omega_{i}'}^2+\delta_i H_i|w|_{1,h,\Omega_i'}^2)+|w|_{1,\alpha,\Omega_i'}^2\\
&\lesssim \pi(\alpha)\delta_i^{-1} H_i^{-1}\sum_{K\cap\Omega_i'\neq\emptyset}||w||_{0,K}^2+\pi(\alpha)\frac{H_i}{\delta_i}|w|_{1,h,\Omega_i'}^2+|w|_{1,\alpha,\Omega_i'}^2\\
&\lesssim \pi(\alpha)\frac{H_i}{\delta_i}\sum_{K\cap\Omega_i'\neq\emptyset}|u|_{1,h,\omega_K}^2+\pi(\alpha)
\frac{H_i}{\delta_i}|w|_{1,h,\Omega_i'}^2+|w|_{1,\alpha,\Omega_i'}^2,\\ \label{eq:estI1_over}
\end{split}
\end{align}
where in the last inequality we have used the approximation property of $R_H$ from Lemma \ref{lemma:approx}. \\
\indent Next, we estimate the term $I_2.$ We first note that
\begin{align}
\begin{split}
|[u_i]|_{0,e_{jl}}^2&=|u_i^j-u_i^l|_{0,e_{jl}}^2=|I_h(\chi_iw^j)-I_h(\chi_iw^l)|_{0,e_{jl}}^2\\
&=|I_h(\chi_i(w^j-w^l))|_{0,e_{jl}}^2\leq|w_j-w_l|_{0,e_{jl}}^2=|[w]|_{0,e_{jl}}^2,\label{eq:estjump}
\end{split}
\end{align}
where in the above equality we have used $|\chi_i|\leq 1,\,\,1\leq i\leq N,$ and the stability property of $I_h$ w.r.t. the norm $||\cdot||_{0,\infty,\Omega}.$\\
\indent It follows from (\ref{eq:estjump}) that
\begin{align*}
\begin{split}
I_2=\eta\sum_{e_{jl}\subset\Omega_i'\atop j>l}\int_{e_{jl}}\frac{W_e}{h_e}[u_i]^2ds\lesssim\eta\sum_{e_{jl}\subset\Omega_i'\atop j>l}\int_{e_{jl}}\frac{W_e}{h_e}[w]^2ds.
\end{split}
\end{align*}
\indent Finally, for the term $I_3,$ since, by definition, $u_i=I_h(\chi_i w)$ and $\chi_i=1$ on $\Xi_i,$ we have
\begin{align*}
\begin{split}
I_3=\eta\sum_{e\subset \Xi_i}\int_{e}\frac{W_e}{h_e}(u_i)_e^2ds\lesssim\eta\sum_{e\subset \Xi_i}\int_{e}\frac{W_e}{h_e}w_e^2ds.
\end{split}
\end{align*}
\indent Combining the estimates of $I_1,$ $I_2$ and $I_3,$ $1\leq i\leq N,$ we have
\begin{align}
\begin{split}
\sum_{i=1}^Na_{i}(u_i,u_i)\lesssim &\sum_{i=1}^N\sum_{\tau\subset\Omega_i'}\int_{\tau}\alpha\nabla u_i\cdot\nabla u_i dx+\eta\sum_{i=1}^N\sum_{e_{jl}\subset\Omega_i'\atop j>l}\int_{e_{jl}}\frac{W_e}{h_e}[u_i]^2ds\\
&+\eta\sum_{i=1}^N\sum_{e\subset \Xi_i}\int_{e}\frac{W_e}{h_e}(u_i)_e^2ds\\
\lesssim& \sum_{i=1}^N \pi(\alpha)\frac{H_i}{\delta_i}\sum_{K\cap\Omega_i'\neq\emptyset}|u|_{1,h,\omega_K}^2+ \sum_{i=1}^N\pi(\alpha)\frac{H_i}{\delta_i}|w|_{1,h,\Omega_i'}^2+ \sum_{i=1}^N|w|_{1,\alpha,\Omega_i'}^2\\
&+ \eta\sum_{i=1}^N\sum_{e_{jl}\subset\Omega_i'\atop j>l}\int_{e_{jl}}\frac{W_e}{h_e}[w]^2ds
+\eta\sum_{i=1}^N\sum_{e\subset \Xi_i}\int_{e}\frac{W_e}{h_e}w_e^2ds\\
\lesssim&  \pi(\alpha)\max_i\frac{H_i}{\delta_i}a_{h}(u,u)+ \sum_{i=1}^N\pi(\alpha)\frac{H_i}{\delta_i}|u-R_Hu|_{1,h,\Omega_i'}^2+ a_{h}(w,w).\label{eq:estonlbf}
\end{split}
\end{align}
\indent Note that,
\begin{align}
\begin{split}
\sum_{i=1}^N\pi(\alpha)\frac{H_i}{\delta_i}|u-R_Hu|_{1,h,\Omega_i'}^2
\lesssim&\sum_{i=1}^N\pi(\alpha)\frac{H_i}{\delta_i}|u|_{1,h,\Omega_i'}^2+\sum_{i=1}^N\pi(\alpha)\frac{H_i}{\delta_i}|R_Hu|_{1,h,\Omega_i'}^2.
\label{eq:u-R_Hu}
\end{split}
\end{align}
\indent It follows from Lemma \ref{lemma:approx} that
\begin{align*}
\sum_{i=1}^N|R_Hu|_{1,h,\Omega_i'}^2\lesssim\max\big{(}\gamma(1),\beta(1)\big{)}a_{h}(u,u),
\end{align*}
\indent which, together with (\ref{eq:u-R_Hu}), implies that
\begin{align*}
\sum_{i=1}^N\pi(\alpha)\frac{H_i}{\delta_i}|u-R_Hu|_{1,h,\Omega_i'}^2
\lesssim\pi(\alpha)\max_i\frac{H_i}{\delta_i}\max\big{(}\gamma(1),\beta(1)\big{)}a_{h}(u,u).
\end{align*}
\indent Since $u_0=R_Hu,$ using the stability property of $R_Hu$ in Lemma \ref{lemma:approx}, we have
\begin{align*}
\begin{split}
a_{h}(w,w)=&a_{h}(u-u_0,u-u_0)\lesssim a_{h}(u,u)+a_{h}(u_0,u_0)\\
\lesssim &\max\big{(}\gamma(\alpha),\eta\beta(\alpha)\big{)}a_{h}(u,u).
\end{split}
\end{align*}
\indent Thus, by (\ref{eq:estonlbf}), we have
\begin{align*}
\begin{split}
\sum_{i=1}^Na_{i}(u_i,u_i)\lesssim\pi(\alpha)\max_i\frac{H_i}{\delta_i}\max\big{(}\gamma(1),\beta(1)\big{)}+\max\big{(}\gamma(\alpha),\eta\beta(\alpha)\big{)}a_{h}(u,u).
\end{split}
\end{align*}
\indent Note that $a_{0}(R_Hu,R_Hu)\lesssim \max\big{(}\gamma(\alpha),\eta\beta(\alpha)\big{)}a_{h}(u,u),$ which
implies that
$$C_0^2 \lesssim \pi(\alpha)\max\big{(}\gamma(1),\beta(1)\big{)}\max_i\frac{H_i}{\delta_i}+\max\big{(}\gamma(\alpha), \eta\beta(\alpha)\big{)}.$$
 \indent The other two parameters $\rho(M)$ and $\omega$ are estimated in the same way as before.
\end{proof}

\subsection{\rm Oscillatory boundary conditions}
In this section, we follow the notations used in \cite{I.P.R.}, and give an explicit bound for the indicator $\gamma(\alpha)$ with a slightly different proof. We show that if the high-conductivity region crosses the boundaries of coarse grid blocks,
the coarse basis function with linear boundary condition fails to give a robust bound for the condition number. The coarse basis functions with oscillatory boundary condition, on the other hand, yield a robust method.  First, for each $K\in\mathcal{T}_H,$ let $\rho\geq 1$ be an arbitrary constant, define the set $$K(\rho):=\{x\in K, \alpha(x)>\rho\}.$$ Since $\alpha(x)$ is piecewise constant with respect to $\mathcal{T}_h,$ $K(\rho)$ is a union of fine grid elements.
Let the region $$K(\rho)=K^I(\rho)\cup K^B(\rho)$$ be
 associated with each $K\in \mathcal{T}_H,$ where the set $K^B(\rho)$ contains the components of $K(\rho)$ whose closure touches $\partial K$ and $K^I(\rho)$ contains all the interior components of $K(\rho).$ The term $\varepsilon(\rho, K),$ representing the distance between $K^I(\rho)$ and $K^B(\rho),$ be defined as
 $$\varepsilon(\rho, K)={\rm dist}(K^I(\rho), K^B(\rho)).$$\\
 %In order to give our result, we need the following assumptions.\\
 $\mathbf{Assumption \,\,3.1.}$
 (1)\,\,$K^I(\rho)$ and $K^B(\rho)$ should be well-separated, i.e.,
 $$\varepsilon(\rho, K)\geq h.$$\\
 (2)\,\, $K^B(\rho)$ can be written as a union $K^B(\rho)=\sum_{l=1}^L K_l^B(\rho)$, where the components $K_l^B(\rho)$ are simply connected and pairwise disjoint, and $\alpha(x)$ is a constant on the closure of each component, i.e., $$\alpha(x)=\alpha_l,\,\,{\rm for \,\,all} \,\,x\in\overline{K_l^B(\rho)}.$$\\
 (3)\,\,Let $\Gamma_l^B(\rho)=K_l^B(\rho)\cap\partial K$ be the boundary part of $\partial K$ which locates in $K_l^B(\rho),$ and that $\Gamma^B(\rho)=\sum_{l=1}^L\Gamma_l^B(\rho).$ For all $K\in\mathcal{T}_H,$ let $E$ be any edge of $\partial K,$ we require that
 $$|E\backslash \Gamma^B(\rho)|\gtrsim H_K,$$
 which means that the high conductivity field does not cover too much of $\partial K,\,\,\forall K\in \mathcal{T}_H.$\\
\indent Note that, our assumptions are weaker than those \cite{I.P.R.}, we do not need the coefficient $\alpha(x)$ to be
continuous across the coarse grid boundaries. Next we give the explicit bound for
the indicator $\gamma(\alpha)$ in Theorem \ref{thm:gamma} with a different proof than in \cite{I.P.R.}.
\begin{thm}
Let Assumption 3.1 hold true for each $K\in \mathcal{T}_H,$ then an upper bound for the indicator $\gamma(\alpha),$ with $\psi_{p, \partial K}$ being the oscillatory boundary condition, is given by $$\gamma(\alpha)\lesssim \rho\frac{H}{h}.$$\label{thm:gamma}
\end{thm}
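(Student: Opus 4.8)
The plan is to exploit the energy-minimizing property (\ref{eq:em}) of the multiscale basis function: since $\phi_{p,K}$ has the smallest $\alpha$-weighted energy among all $\theta\in S^h(K)$ with $\theta|_{\partial K}=\psi_{p,\partial K}$, it suffices to exhibit one convenient competitor $\theta$ and show $|\theta|_{1,\alpha,K}^2\lesssim \rho H/h$. Then $\gamma(\alpha)=\max_K\max_p|\phi_{p,K}|_{1,\alpha,K}^2\lesssim \rho H/h$ follows by taking the maximum over the finitely many nodes and coarse elements. Thus the whole argument reduces to a construction on a single coarse element $K$, driven by the explicit form of the oscillatory data and by Assumption 3.1.

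First I would record the one-dimensional energy of the boundary data. Since $\psi_p^\Upsilon$ solves $-(\alpha^\Upsilon(\psi_p^\Upsilon)')'=0$, the flux $\alpha^\Upsilon(\psi_p^\Upsilon)'$ is a constant $C$ on $\Upsilon$, and integrating $(\psi_p^\Upsilon)'=C/\alpha^\Upsilon$ against the boundary values $\psi_p^\Upsilon(x_p)-\psi_p^\Upsilon(x_{p'})=1$ gives $C=(\int_\Upsilon(\alpha^\Upsilon)^{-1}ds)^{-1}$, in agreement with (\ref{eq:ebf}). Consequently
\begin{align*}
\int_\Upsilon\alpha^\Upsilon|(\psi_p^\Upsilon)'|^2ds=C^2\int_\Upsilon(\alpha^\Upsilon)^{-1}ds=\Big(\int_\Upsilon(\alpha^\Upsilon)^{-1}ds\Big)^{-1}.
\end{align*}
By Assumption 3.1(3) the low-conductivity part $E\setminus\Gamma^B(\rho)$ of each edge has length $\gtrsim H$ and carries $\alpha\leq\rho$, so $\int_\Upsilon(\alpha^\Upsilon)^{-1}ds\geq |E\setminus\Gamma^B(\rho)|/\rho\gtrsim H/\rho$, which yields the key estimate $\int_\Upsilon\alpha^\Upsilon|(\psi_p^\Upsilon)'|^2ds\lesssim \rho/H$. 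This is where the hypothesis that the high-conductivity field does not cover too much of $\partial K$ enters.

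Next I would build the competitor $\theta\in S^h(K)$ as a boundary-layer extension adapted to the conductivity structure. I set $\theta$ equal to a single interior constant on the deep interior of $K$, including every interior high-conductivity component $K^I(\rho)$, so those regions carry no gradient. Near $\partial K$, I let $\theta$ decay from its prescribed trace $\psi_{p,\partial K}$ to the interior constant across one fine layer of width $h$. On each boundary high-conductivity component $K_l^B(\rho)$, on which $\alpha\equiv\alpha_l$ by Assumption 3.1(2), I first relax $\theta$ from $\psi_{p,\partial K}$ to a constant $c_l$ representative of the nearly constant trace there, and then relax $c_l$ to the interior constant in the surrounding low-conductivity region; the separation $\varepsilon(\rho,K)\geq h$ from Assumption 3.1(1) guarantees enough room (at least one fine layer) for this second transition to take place entirely where $\alpha\leq\rho$.

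Finally I would bound $|\theta|_{1,\alpha,K}^2=\int_K\alpha|\nabla\theta|^2dx$ by splitting $K$ into the high- and low-conductivity parts. Over the low-conductivity region $\alpha\leq\rho$, so the weighted energy is at most $\rho$ times the unweighted energy of a width-$h$ boundary-layer function whose trace varies by $O(1)$; a normal gradient of order $1/h$ over an area of order $Hh$ gives $\lesssim \rho H/h$, while the tangential part is controlled by the one-dimensional energy of the previous step. Over each component $K_l^B(\rho)$ the relaxation of $\psi_{p,\partial K}$ to $c_l$ across width $h$ contributes energy of order $(\alpha_l|\Gamma_l^B(\rho)|/h)(\mathrm{osc})^2$, where $\mathrm{osc}$ is the oscillation of the trace on $\Gamma_l^B(\rho)$; since $(\psi_p^\Upsilon)'$ is constant on $\Gamma_l^B(\rho)$ one has $\alpha_l(\mathrm{osc})^2=|\Gamma_l^B(\rho)|\int_{\Gamma_l^B(\rho)}\alpha^\Upsilon|(\psi_p^\Upsilon)'|^2ds$, so summing over $l$ bounds the total by $(\max_l|\Gamma_l^B(\rho)|^2/h)\sum_l\int_{\Gamma_l^B(\rho)}\alpha^\Upsilon|(\psi_p^\Upsilon)'|^2ds\lesssim (H^2/h)(\rho/H)=\rho H/h$. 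I expect the main obstacle to be the bookkeeping at the high/low-conductivity interfaces: producing a single continuous $\theta\in S^h(K)$ that realizes all these transitions simultaneously, using the separation assumption to keep every $O(1)$-to-constant transition inside a region where either $\alpha$ is uniformly large (so that only the small oscillation of the trace is being relaxed) or $\alpha\leq\rho$ (so that the crude $1/h$ gradient is affordable), thereby avoiding any configuration in which a large $\alpha$ multiplies a large gradient.
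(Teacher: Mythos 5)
Your proposal is correct and follows essentially the same route as the paper: reduce via the energy-minimizing property (\ref{eq:em}) to bounding an explicit competitor $\theta$, exploit the constant flux of the one-dimensional oscillatory solution together with Assumption 3.1(3) (giving $\int_\Upsilon(\alpha^\Upsilon)^{-1}ds\gtrsim H/\rho$) to control the energy associated with the high-conductivity boundary components, and pay $\rho\cdot(1/h)^2\cdot O(Hh)=\rho H/h$ for a width-$h$ boundary-layer transition in the low-conductivity region. The only difference is cosmetic: inside each $K_l^B(\rho)$ the paper extends the boundary data constantly in the normal direction (so that part costs only $O(\rho)$), while you relax it to a constant $c_l$ across one fine layer via the oscillation identity, which costs $O(\rho H/h)$ but still fits the claimed bound.
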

\begin{proof}
 The key idea is to partition $K(\rho)$ into two parts $K_1(\rho)$ and $K_2(\rho),$  c.f., Figure 1, build a special function $\theta\in S^h(K)$ whose bound can be estimated.
\begin{figure}[!h]
  % Requires \usepackage{graphicx}
  \centering
  \includegraphics[width=7cm, height=5cm]{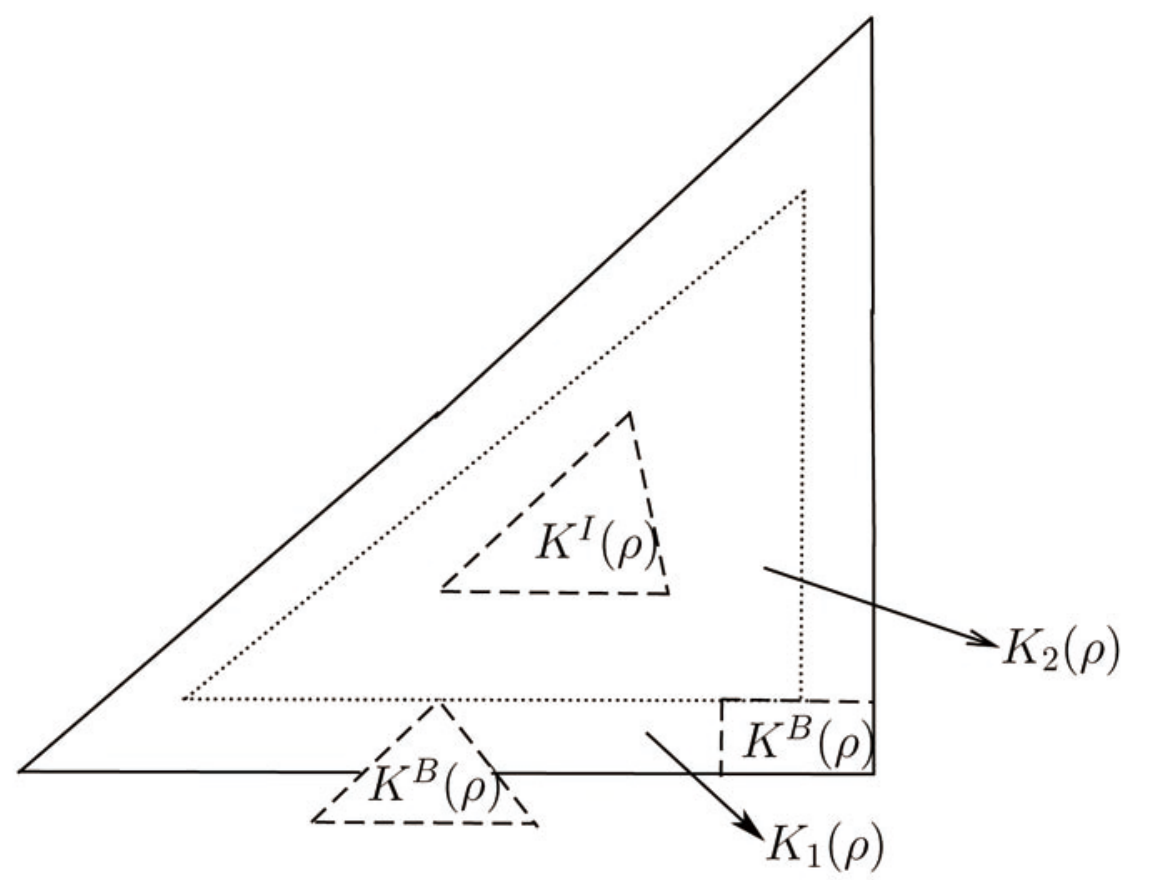}
  \caption{The areas surrounded by dashed lines are $K^B(\rho)$ and $K^I(\rho)$ respectively. $K_1(\rho)$ and $K_2(\rho)$ are separated by dotted lines.}
\end{figure}

Let $K\in \mathcal{T}_H, $ and $\psi_{p, \partial K}$ be the oscillatory boundary condition, $\phi_{p, K}$ is the multiscale basis function which is built through the discrete harmonic extension, c.f., (\ref{eq:discrete}). Accordingly,
$$|\phi_{p,K}|_{1,\alpha,K}\leq|\theta|_{1,\alpha,K}, \,\,{\rm for\,\, all} \,\,\theta\in S^h(K) {\rm\,\, which\,\, satisfies }\,\,\theta|_{\partial K}=\phi_{p,K}|_{\partial K}.$$
\indent \,\,We only need to construct a function $\theta$ for which we can estimate its $|\cdot|_{1,\alpha,K}$ norm. We define the function $\theta$ explicitly by its values at the nodes  of $\mathcal{N}^h(\overline{K}),$
where $\mathcal{N}^h(\overline{K})$ denotes the set of fine mesh vertexes in $\overline{K}.$
Obviously, the function $\theta$ is contained in $S^h(K).$
We begin by constructing $\theta$ on each $K^B(\rho). $ By Assumption 3.1, $K^B(\rho)=\sum_{l=1}^L K_l^B(\rho)$ and that $\Gamma^B(\rho)=\sum_{l=1}^L\Gamma_l^B(\rho)$ with $\Gamma_l^B=K_l^B(\rho)\cap \partial K$ being the part of $\partial K$ locating in $K_l^B(\rho).$ Next, we define the values of $\theta$ on $\Gamma_l^B(\rho),$ then extend to $K_l^B(\rho),$ $\forall\,\, 1\leq l\leq L.$
For simplicity, we assume that $\Gamma_l^B$ lies only in the interior of $\partial K,$ i.e., $\overline{\Gamma_l^B}$ does not touch any vertex of $K.$  For a fixed $l,$ we may choose local coordinate system $(x_1, x_2)$ and some $b_1, b_2>0$ such that $K\subset \{ (x_1, x_2):x_2\geq0\}, \,\,\Gamma_l^B=\{(x_1,0): x_1\in [b_1, b_2]\}.$ Define $\theta(x_1, x_2)=\psi(x_1, 0)$ for all $(x_1, x_2)\in K_l^B(\rho).$ After defining all the values of $\theta$ on $K^B(\rho),$ its values at the nodes of $K\backslash \overline{K^B(\rho)}$ for which we set $0$.\\
\indent \,\, Define $$K_1(\rho)=\{x\in K: {\rm dist}(x, \partial K)\leq d\},$$ where $d=\max_{x\in\overline{K^B(\rho)}}{\rm dist}(x, \partial K).$ Similarly,
$K_2(\rho)=K\backslash \overline{K_1(\rho)}$ denote the area of $K$ which is not contained in $K_1(\rho).$ Clearly, $K^B(\rho)\subset K_1(\rho)$ and $K^I(\rho)\subset K_2(\rho)$ hold by their definitions.\\
\indent \,\,We give an upper bound for $|\theta|_{1,\alpha,K}^2.$ We note,
$$|\theta|_{1,\alpha,K}^2\leq|\theta|_{1,\alpha,K_1(\rho)}^2+|\theta|_{1,\alpha,K_2(\rho)}^2=I_1+I_2.$$
\indent \,\,For the term $I_1,$ we have $$I_1=|\theta|_{1,\alpha,K_1(\rho)}^2=|\theta|_{1,\alpha,K^B(\rho)}^2+|\theta|_{1,\alpha,K_1(\rho)\backslash K^B(\rho)}^2=I_{11}+I_{12}.$$
From (\ref{eq:ebf}) and the nodal values defined above we can see that $(\frac{\partial\theta}{\partial x_1})^2+(\frac{\partial\theta}{\partial x_2})^2=\alpha^{-2}/\big{(}\int_\Upsilon(\alpha^\Upsilon)^{-1}ds\big{)}^2$ on each $\tau\subset K^B(\rho),$ where $\Upsilon\cap K^B(\rho)\neq\emptyset$ is the coarse edge of $K$. Hence
\begin{align*}
\begin{split}
I_{11}=\sum_{\tau\in K^B(\rho)}\int_\tau\alpha((\frac{\partial\theta}{\partial x})^2+(\frac{\partial\theta}{\partial y}))^2dx_1dx_2=\sum_{\tau\in K^B(\rho)}\int_\tau\alpha^{-1}/\big{(}\int_\Upsilon(\alpha^\Upsilon)^{-1}ds\big{)}^2dx_1dx_2.
\end{split}
\end{align*}
\indent Let $\Upsilon_1=\Upsilon\cap K^B(\rho)$ and $\Upsilon_2=\Upsilon\backslash K^B(\rho),$ since $(\alpha^\Upsilon)^{-1}\geq\rho^{-1}$ on $\Upsilon_2,$ it follows from Assumption 3.1 (3) that
\begin{align*}
\int_\Upsilon(\alpha^\Upsilon)^{-1}ds=\int_{\Upsilon_1}(\alpha^\Upsilon)^{-1}ds+\int_{\Upsilon_2}(\alpha^\Upsilon)^{-1}ds>\int_{\Upsilon_2}(\alpha^\Upsilon)^{-1}ds\gtrsim \rho^{-1}|\Upsilon_2|\gtrsim \rho^{-1}H_K,
\end{align*}
which implies that
\begin{align*}
I_{11}=\sum_{\tau\in K^B(\rho)}\int_\tau\alpha^{-1}/\big{(}\int_\Upsilon(\alpha^\Upsilon)^{-1}ds\big{)}^2dx_1dx_2\lesssim\sum_{\tau\in K^B(\rho)}\rho^{-1}|\tau|/\rho^{-2}H_K^2\lesssim \rho.\label{eq:estI11}
\end{align*}
\indent \,\,For the term $I_{12},$ let $K_3(\rho)=K_1(\rho)\backslash \overline{K^B(\rho)},$ and
$$K_{3}(\rho_h)=\{\tau\subset K_3(\rho):\overline{\tau}\cap\partial K_3(\rho)\neq \emptyset\}$$
be the boundary layer of $K_3(\rho)$ with width $h.$  Note that, by definition, we have $\nabla \theta=0$ on
$K_3(\rho)\backslash K_{3}(\rho_h)=\{\tau\subset K_3(\rho):\overline{\tau}\cap\partial K_3(\rho)= \emptyset\},$ and since $\alpha\leq \rho$ on $K_3(\rho),$ we get
\begin{align*}
\begin{split}
I_{12}=&|\theta|_{1,\alpha,K_3(\rho)}^2=\sum_{\tau\in K_3(\rho)}|\theta|_{1,\alpha,\tau}^2 =\sum_{\tau\in K_3(\rho_h)}|\theta|_{1,\alpha,\tau}^2 \lesssim\rho\sum_{\tau\in K_3(\rho_h)}|\theta|_{1,\tau}^2\lesssim \rho\frac{H}{h},
\end{split}
\end{align*}
where in the last inequality we have used the fact that $|\theta|_{1,\tau}^2\lesssim 1$ since all the nodal values of $\theta$ defined on $\tau$ are between $0$ and $1$.\\
\indent \,\,Finally, for the term $I_{2},$ define
$$K_2^B(\rho)=\{\tau\subset K_2(\rho):\\ \overline{\tau}\cap\partial K_2(\rho)\neq\emptyset\},$$
which is the boundary layer of width $h$ of $K_2(\rho),$ and
$$K_2^I(\rho)=K_2(\rho)\backslash K_2^B(\rho),$$ which is the interior part of $K_2(\rho).$\\
\indent By the definition of $\theta$, we have $\theta(x)=0$ on $K_2^I(\rho).$ Since $\alpha\leq \rho$ on $K_2^B(\rho),$ we get
\begin{align*}
\begin{split}
I_2=|\theta|_{1,\alpha,K_2(\rho)}^2
=\sum_{\tau\subset K_2^B(\rho)}|\theta|_{1,\alpha,\tau}^2
\leq\rho\sum_{\tau\subset K_2^B(\rho)}|\theta|_{1,\tau}^2
\leq\rho\frac{H}{h}.
\end{split}
\end{align*}
\indent The theorem is proved by combining the upper bounds for $I_1$ and $I_2$ together.
\end{proof}

\subsection{\rm Upper bound of DG indicator}
In this section, we estimate the term $\beta(\alpha).$ Note that, if the coefficient field is continuous across the coarse grid boundaries, we have $\beta^I(\alpha)=0,$ thus in this case, what we need to focus on is the estimate of the term $\beta^B(\alpha).$ However, in reality, the coefficient field can be discontinuous across the coarse grid boundaries.
In both cases, suppose the coefficient of the conductivity field satisfies $W_e\lesssim \rho,$ where $\rho\geq 1$ is a given number, on the edges of fine mesh triangles which intersect with the coarse grid boundaries. Then the indicator $\beta(\alpha)$ yield a bound which is independent of the high contrast in the coefficients.
\begin{thm}
For all $K\in\mathcal{T}_H,$ $e\subset \partial K,$  let the coefficient field be discontinuous across the coarse grid boundaries. If there exists $\rho>0$ such that $W_e\lesssim \rho,$ then
$$\beta(\alpha)\lesssim \rho\frac{H}{h}.$$\label{thm:beta}
\end{thm}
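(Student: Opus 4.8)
The plan is to exploit the single structural fact that every coarse basis function restricts on a coarse edge to its prescribed boundary data $\psi_{p,\partial K}$, which by construction satisfies $0\le \psi_{p,\partial K}\le 1$. First I would observe that on a shared coarse edge $\partial K_{mk}$ the jump is $[\phi_p]=\phi_{p,K_m}-\phi_{p,K_k}=\psi_{p,\partial K_m}-\psi_{p,\partial K_k}$, a difference of two quantities each lying in $[0,1]$; hence $|[\phi_p]|\le 1$ pointwise on $\partial K_{mk}$. Note this jump need not vanish precisely because $\alpha$ may be discontinuous across $\partial K_{mk}$, which is exactly the situation the indicator $\beta^I$ is designed to measure. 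The pointwise bound $0\le\psi_p^\Upsilon\le 1$ is immediate from the explicit formula (\ref{eq:ebf}), since $\Upsilon_x\subset\Upsilon$ forces the numerator to be dominated by the denominator.

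Given this pointwise bound, the per-edge contribution is estimated directly. Since $h_e$ is the length of the fine edge $e_{jl}$, I would compute
\[
\int_{e_{jl}}\frac{W_e}{h_e}|[\phi_p]|^2\,ds \le \frac{W_e}{h_e}\,|e_{jl}| = W_e \lesssim \rho,
\]
using the standing hypothesis $W_e\lesssim\rho$ on the fine edges meeting the coarse grid boundaries. Next I would sum over the fine edges $e_{jl}\subset \partial K_{mk}$. Because the coarse mesh is quasi-uniform with parameter $H$ and is aligned with the quasi-uniform fine mesh of size $h$, each coarse edge is subdivided into $\lesssim H/h$ fine edges. Therefore
\[
\sum_{e_{jl}\subset\partial K_{mk}\atop j>l}\int_{e_{jl}}\frac{W_e}{h_e}|[\phi_p]|^2\,ds \lesssim \rho\,\frac{H}{h},
\]
and taking the maxima over $\partial K_{mk}$ and over the relevant vertices $x_p$ gives $\beta^I(\alpha)\lesssim\rho\,H/h$.

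The same argument applies verbatim to $\beta^B(\alpha)$: on a boundary coarse edge the relevant quantity is $\phi_{p,m}=\psi_{p,\partial K_m}\in[0,1]$, so $\int_e\frac{W_e}{h_e}|\phi_{p,m}|^2\,ds\le W_e\lesssim\rho$, and the same edge count yields $\beta^B(\alpha)\lesssim\rho\,H/h$. Combining through $\beta(\alpha)=\max\big(\beta^I(\alpha),\beta^B(\alpha)\big)$ finishes the proof.

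Since the estimate rests only on the uniform bound $\|\psi_{p,\partial K}\|_{0,\infty,\partial K}\le 1$ and on counting fine edges, there is no serious analytic obstacle; the argument does not even use the harmonicity of $\phi_{p,K}$ inside $K$, only its boundary values. The only point requiring a little care is the bookkeeping of the weight: the hypothesis $W_e\lesssim\rho$ must be invoked edge by edge, and one should keep in mind that the factor $H/h$ is unavoidable here because there is no cancellation available in the $L^2$ norm of the jump --- each of the $\sim H/h$ fine edges can contribute an $O(\rho)$ amount.
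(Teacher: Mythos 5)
Your proposal is correct and follows essentially the same route as the paper's proof: bound the jump pointwise by $1$ (from $0\le\psi_{p,\partial K}\le 1$), invoke $W_e\lesssim\rho$ edge by edge, count the $\lesssim H/h$ fine edges on each coarse edge, repeat for $\beta^B(\alpha)$, and take the maximum. The only difference is cosmetic --- you justify $|[\phi_p]|\le 1$ explicitly from the boundary-data properties and formula (\ref{eq:ebf}), whereas the paper simply asserts $|[\phi_p]|\lesssim 1$.
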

\begin{proof}
By definition, c.f., (\ref{eq:betaI}), we have
\begin{align*}
\begin{split}
\beta^I(\alpha)=& \max_{\partial K_{mk}\subset\Omega\atop m>k}\max_{x_p\in\mathcal{N}^H(K_m)\atop x_p\in\overline{ \partial K_{mk}}}
 \sum_{e_{jl}\subset\partial K_{mk}\atop j>l}(\int_e\frac{W_e}{h_e}|[\phi_p]|^2ds)\\
 \lesssim&  \max_{\partial K_{mk}\subset\Omega\atop m>k}\max_{x_p\in\mathcal{N}^H(K_m)\atop x_p\in\overline{ \partial K_{mk}}}
 \sum_{e_{jl}\subset\partial K_{mk}\atop j>l}(\int_e\frac{\rho}{h_e}|[\phi_p]|^2ds)\\ \lesssim & \max_{\partial K_{mk}\subset\Omega\atop m>k}\max_{x_p\in\mathcal{N}^H(K_m)\atop x_p\in\overline{ \partial K_{mk}}}
 \sum_{e_{jl}\subset\partial K_{mk}\atop j>l}(\int_e\frac{\rho}{h_e}1^2ds)\lesssim \rho\frac{H}{h},
\end{split}
\end{align*}
using the fact that $|[\phi_p]|\lesssim 1.$ The same result holds for the estimate on $\beta^B(\alpha).$ Hence, since $\beta(\alpha)=\max\big{(}\beta^I(\alpha),\beta^B(\alpha)\big{)},$ the theorem is proved.
\end{proof}%

\section{Numerical Experiments}
\setcounter{equation}{0}
\setcounter{rem}{0}
In this section, we present our numerical results,
where we solve the equation (\ref{eq:eq}, \ref{eq:bc}) with $g=0$ on the square domain $\Omega=[0, 1]^2$.
We run the preconditioned conjugate gradient method
until the $l_2$ norm of the residual is reduced by a factor of $10^6.$\\
\indent In the numerical experiments, subdomains $\{\Omega_i\}_{i=1}^N$ are all square shaped,
and each subdomain consists of two coarse triangles.
In each of our numerical experiments below, we consider the performance of our two methods
and compare with the method in \cite{I.P.R.}.
The results of our method are shown in the columns under the heading "Discontinuous Galerkin",
while the results from \cite{I.P.R.} are shown under the heading "Continuous Galerkin".
We also use different basis functions for the coarse
space, $\phi^L_p,$ $\phi^{MS,L}_p,$ and $\phi^{MS,OSC}_p,$ that is, the piecewise linear basis function,
the multiscale basis function with linear boundary condition,
and the multiscale basis function with oscillatory boundary condition, respectively.
We choose the same penalty parameter $\eta$ for both the fine and coarse bilinear forms.
\begin{figure}[!h]
  % Requires \usepackage{graphicx}
  \centering
  \includegraphics[width=6cm, height=6cm]{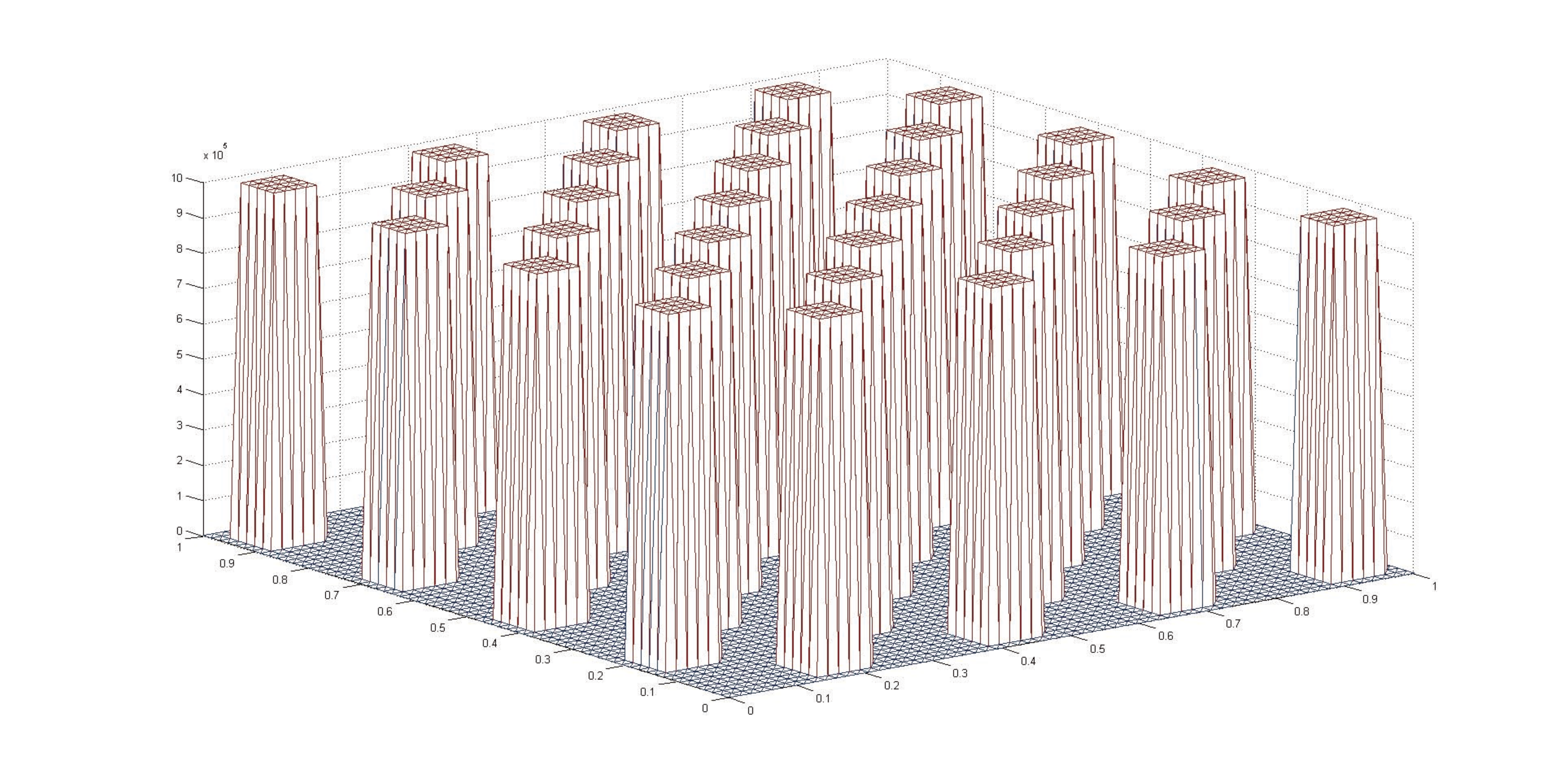}
  \caption{The coefficient $\alpha(x)$ corresponding to the binary domain of Example 4.1.}
\end{figure}
\begin{exm}
{\rm
We begin with our first example here, see Figure 2, where $H\geq 8h$ and $\alpha(x)$ denotes
a 'binary' medium with $\alpha(x)=\widehat{\alpha}$ on a square area inclusion lying in the middle of
each coarse grid element $K\in\mathcal{T}_H,$ and at a distance of $H/8$ both from the horizontal and
the vertical edge of $K,$ and $\alpha(x)=1$ in the rest of the domain. We study the
behavior of the preconditioners as $\widehat{\alpha}\rightarrow\infty.$ \\
\indent For the discontinuous Galerkin formulation, we first consider the nonoverlapping method.
We note that $\max_{K_{mk}\atop m>k}\max_{e\subset \partial K_{mk}}W_e\lesssim 1,$ and that
the DG indicator $\beta(\alpha)=0$ because the coefficient is continuous across the coarse grid boundaries as well as the zero Dirichlet boundary condition.
Taking $\rho=1$ in Theorem \ref{thm:gamma} implies that $\gamma_{DG}^{MS,L}(\alpha)\lesssim \frac{H}{h}.$
It then follows from Theorem \ref{thm:nonoverlapping} that the nonoverlapping method has a bound which is independent of the jumps.
For the overlapping method, we
have $\pi(\alpha)=\delta_i^2\|\alpha|\nabla \chi_i|^2\|_{L^\infty(\Omega)}\lesssim 1,$  thus in this case
the multiscale basis function with linear boundary condition will yield a robust bound.
We note that, in this example, the multiscale basis function with oscillatory boundary condition
is the same as the one with linear boundary condition.\\
\indent The numerical results in Table 1 show that,
both the nonoverlapping and overlapping method with multiscale coarse basis functions with linear boundary conditions
are robust as predicted by the theory.
The overlapping method \cite{I.P.R.} with the same multiscale coarse basis functions
produces almost the same condition number estimates, however, for the linear coarsening,
the results in Table 1 show a loss of robustness of the overlapping Schwarz method as $\widehat{\alpha}$ goes from $10^0$ to $10^6.$ \\

\begin{table}[!h] \label{Table 1}
\caption{Condition number estimates of the Schwarz methods on Example 4.1 with $h=1/128,$ $H=8h,$ $\delta=2h,$ and $\eta=4$.}
\begin{center}
\begin{tabular} {ccccccccc}
  \hline

  \noalign{\smallskip}

  & \multicolumn{3}{c}{~~~~~~Discontinuous Galerkin}&\multicolumn{2}{c}{Continuous Galerkin}\\
 & \multicolumn{1}{c}{Nonoverlapping}&\multicolumn{1}{c}{Overlapping}&&\multicolumn{2}{c}{Overlapping}\\

  \noalign{\smallskip}

$\widehat{\alpha}$\,\,\, &${\rm MS,L}$   &${\rm MS,L}$   &$\gamma_{DG}^{MS,L}(\alpha)$ &${\rm MS,L}$ &${\rm L}$ \\
\hline\noalign{\smallskip}
$10^0$    &26.79                &6.43             &1.00   &5.64    &~~~5.6     \\
$10^2$    &21.61                &6.44             &1.42   &5.79    &~~58.6    \\
$10^4$    &21.49                &6.81             &1.44   &5.80    &358.3    \\
$10^6$    &21.49                &6.81             &1.44   &5.80    &378.7    \\
\hline\noalign{\smallskip}
  \noalign{\smallskip}
\end{tabular}
\end{center}
\end{table}

}
\end{exm}

\indent In our next experiments, we study the behavior with different penalty terms.
As we can see from Table 2,
the condition number estimate for the nonoverlapping method grows linearly with the penalty
parameter $\eta.$ However, it is almost constant for the overlapping method,
which suggests that the condition number bound do not depend on the penalty parameter.
The results in Table 2 is in agreement with Theorem \ref{thm:nonoverlapping} and Theorem \ref{thm:overlapping}.

\begin{table}[!h] \label{Table 2}
\caption{Discontinuous Galerkin formulation on Example 4.1. Condition number estimates of the Schwarz method with $h=1/128,$ $H=8h$ and $\delta=2h({\rm only\,\, for \,\,overlapping}).$ }
\begin{center}
\begin{tabular} {ccccccc}
  \hline

  \noalign{\smallskip}
 &\multicolumn{3}{c}{Nonoverlapping method}&\multicolumn{3}{c}{Overlapping method} \\
  $\widehat{\alpha}$  &  $\eta=5$
  & $\eta=10$
  &$\eta=100$ & $\eta=5$ & $\eta=10$ & $\eta=100$\\
  %\noalign{\smallskip}
  %\hline
  \noalign{\smallskip}

  \hline\noalign{\smallskip}
$10^0$    &33.14    &64.88     &635.2    &6.47 &6.56  &6.88   \\
$10^2$    &26.55    &51.09     &487.3   &6.73 &6.87  &6.96    \\
$10^4$    &26.39    &50.74    &483.4   &6.83 &6.89  &6.99    \\
$10^6$    &26.40    &50.73     &483.4   &6.83 &6.89  &6.99    \\

  \noalign{\smallskip}

  \hline

\end{tabular}

\end{center}

\end{table}
\begin{rem}
{\rm  Note that, in our numerical experiments, we test the model problem with zero Dirichlet boundary condition, thus all the degrees of freedom are inside domain $\Omega,$ which means that our DG indicator $\beta(\alpha)$ defined in (\ref{eq:beta}) is only for $\partial K_{mk}\subset \Omega$ but not for the coarse grid boundaries on $\partial \Omega.$ Moreover, in Example 4.1, since the coefficient of the conductivity field is continuous across coarse grid boundaries, we have $\beta(\alpha)=0$ due to the fact that the jump of the coarse basis functions corresponding to the same coarse node is equal to 0. From Theorem \ref{thm:nonoverlapping}, we know that the condition number bound for the nonoverlapping case will be $$\eta\max_{\partial K_{mk}\atop m>k} \max_{e_{jl}\subset \partial K_{mk}\atop j>l}W_e\gamma(1)\frac{H}{h}+\gamma(\alpha),$$ taking $\rho=1$ in Theorem \ref{thm:gamma} we have $\gamma(\alpha)\lesssim \frac{H}{h}$, thus theoretically speaking, the condition number bound will grow linearly with the penalty parameter $\eta.$ While for the overlapping case, From Theorem \ref{thm:overlapping}, we know that the condition number bound for the overlapping case will be $$\pi(\alpha)\max(\gamma(1),\beta(1))\max_i\frac{H_i}{\delta_i}+\gamma(\alpha),$$ taking $\rho=1$ in Theorem \ref{thm:gamma} implies $\gamma(\alpha)\lesssim \frac{H}{h}$, thus in this case, our condition number bound will keeps unchanged independent of the penalty parameter $\eta$. Both of the two cases are matched exactly by our numerical experiments in Table 2.

}
\end{rem}
\begin{figure*}[!h]
\centering
\begin{minipage}[t]{0.45\linewidth}
    \centering
    \includegraphics[width=6cm, height=6.5cm]{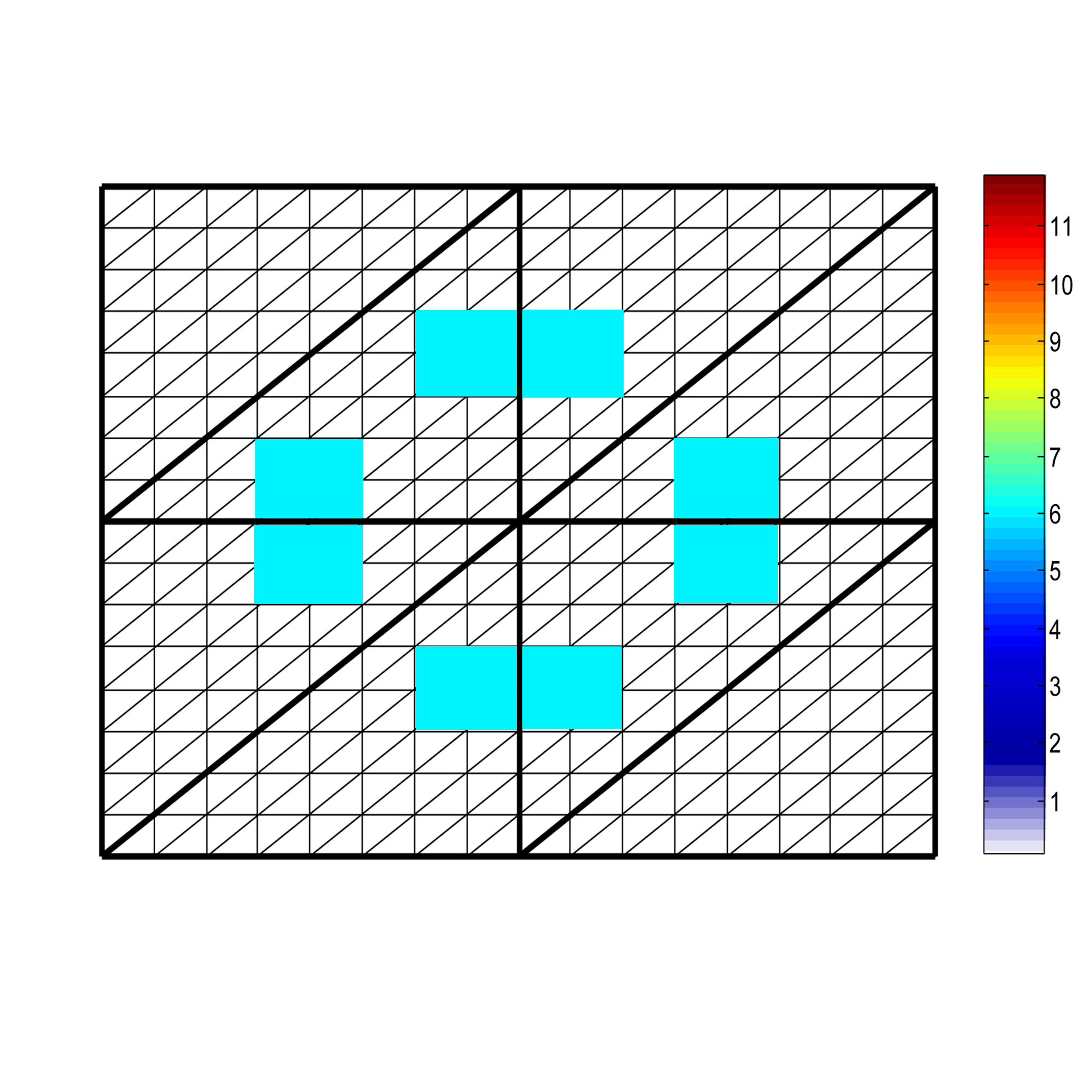}
    \caption{Example 4.2 with $\alpha(x)$ crossing the coarse grid boundaries, $\alpha(x)=\widehat{\alpha}$ on the channels and $\alpha(x)=1$ otherwise.}
    %\label{Fig 1a}
\end{minipage}
\hspace{3ex}
\begin{minipage}[t]{0.45\linewidth}
    \centering
    \includegraphics[width=6cm, height=6.5cm]{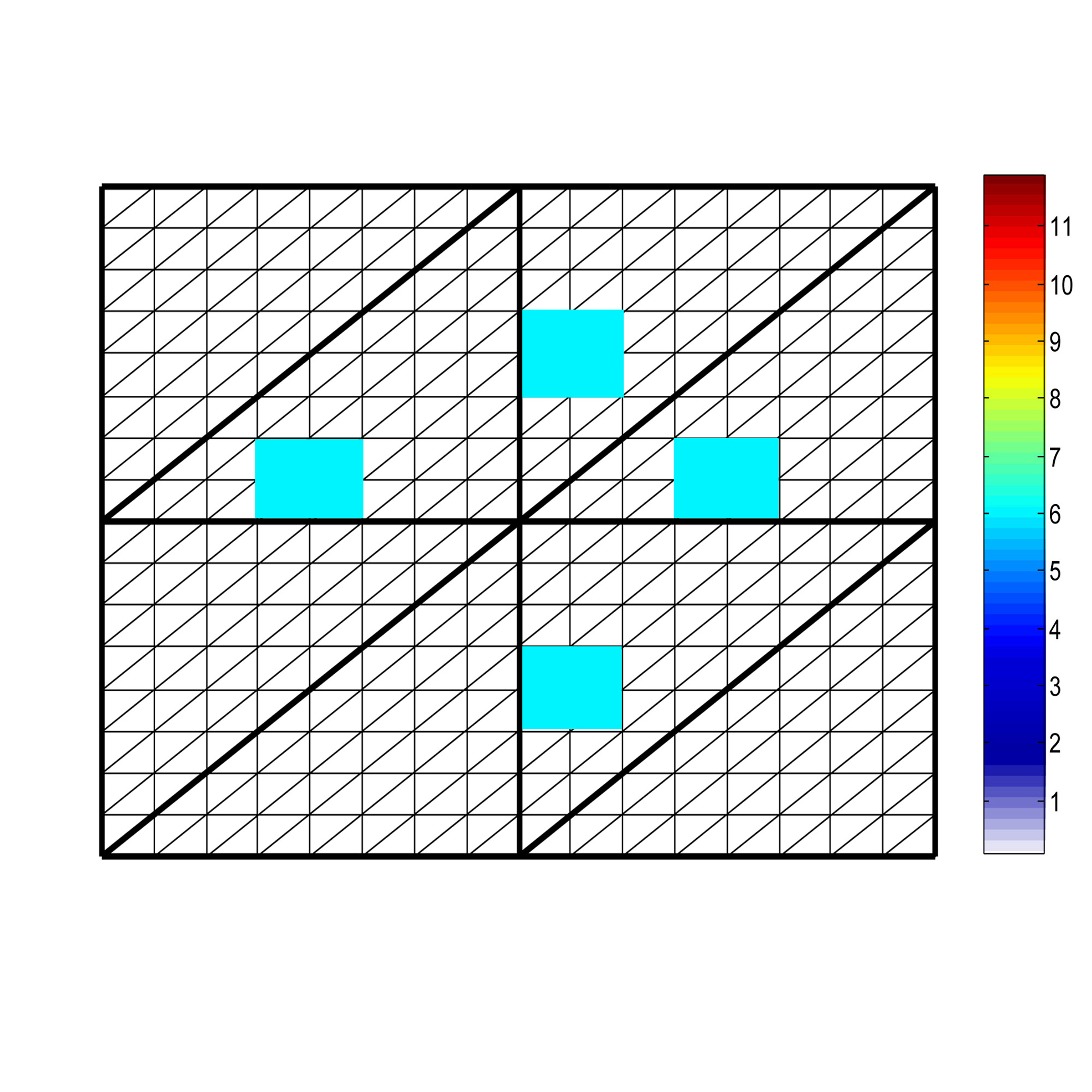}
     \caption{Example 4.2 with $\alpha(x)$ touching the coarse grid boundaries, $\alpha(x)=\widehat{\alpha}$ on the channels and $\alpha(x)=1$ otherwise.}
    %\label{Fig 1b}
\end{minipage}
\end{figure*}

\begin{exm}
{\rm
In this example, the high conductivity field is no longer contained inside,
it touches the coarse grid boundaries, see Figure 3 and Figure 4.
In Figure 3, The conductivity field $\alpha(x)$ corresponds to a binary medium with background $\alpha(x)$ being equal
to one, and $\alpha=\widehat{\alpha}$ on the channels with an area equal to $2h\times4h$ each.
In Figure 4, the high conductivity channels with diameter $2h\times 2h$ are located only on one side of the coarse grid boundaries.\\
\indent We first compare the behavior of our nonoverlapping method on the two cases shown in Figure 3 and Figure 4.
For the the conductivity field given in Figure 3, $\gamma(\alpha)\lesssim \frac{H}{h}$
since $\max_{K}\max_{e\subset \partial K}W_e=\widehat{\alpha}$,
$\gamma(\alpha)\lesssim \frac{H}{h}$ (taking $\rho=1$ in Theorem \ref{thm:gamma}),
and $\beta(\alpha)=0$. Thus Theorem  \ref{thm:nonoverlapping} predicts that the condition number bound will
grow as $\widehat{\alpha}\rightarrow \infty.$
However, for the conductivity field in Figure 4, we
have $\max_{K}\max_{e\subset \partial K}W_e\lesssim 1,$ $\gamma(\alpha)\lesssim \frac{H}{h}$ (taking $\rho=1$ in Theorem \ref{thm:gamma}),
and $\beta(\alpha)\lesssim \frac{H}{h}$ (taking $\rho=1$ in Theorem \ref{thm:beta}).
Theorem \ref{thm:nonoverlapping} predicts that the bound is robust in this case. For the overlapping method,
it is easy to see
that $\pi(\alpha)\lesssim \max_i\delta_i^2\|\alpha\nabla |\chi_i|^2\|_{L_\infty(\Omega)}\lesssim \delta_i^2/h^2\approx1.$ It
follows from Theorem \ref{thm:overlapping} that the condition number bound will be independent of the jumps
for both the conductivity field in Figure 3 and in Figure 4.
The numerical results in Table 3 confirm these results.

\begin{table}[!h] \label{Table 5}
\caption{Discontinuous Galerkin formulation on Example 4.2. Condition number estimates
for the Schwarz method with $h=1/128,$ $H=8h,$ $\delta=4h,$ and the penalty parameter $\eta=4 $. }
\begin{center}
\begin{tabular} {cccccc}
  \hline

  \noalign{\smallskip}

  &  \multicolumn{4}{c}{~Nonoverlapping~ Overlapping}  \\

  \noalign{\smallskip}

$\widehat{\alpha}$\,\,\, &  ${\rm Fig\,\,3}$& ${\rm Fig\,\,4}$ &  ${\rm Fig\,\,3}$&  ${\rm Fig\,\,4}$\\
  \noalign{\smallskip}

  \hline\noalign{\smallskip}

$10^0$         &2.68e+1 &26.79     &6.23     &6.24 \\
$10^2$         &5.00e+2 &28.91      &7.17    &7.15  \\
$10^4$         &4.76e+4 &28.95  &7.23   &7.17 \\
$10^6$         &4.76e+6  &28.96  &7.23  &7.17 \\
\hline\noalign{\smallskip}

\end{tabular}

\end{center}

\end{table}
}
\end{exm}

\begin{figure}[!h]
  % Requires \usepackage{graphicx}
  \centering
  \includegraphics[width=5cm, height=5cm]{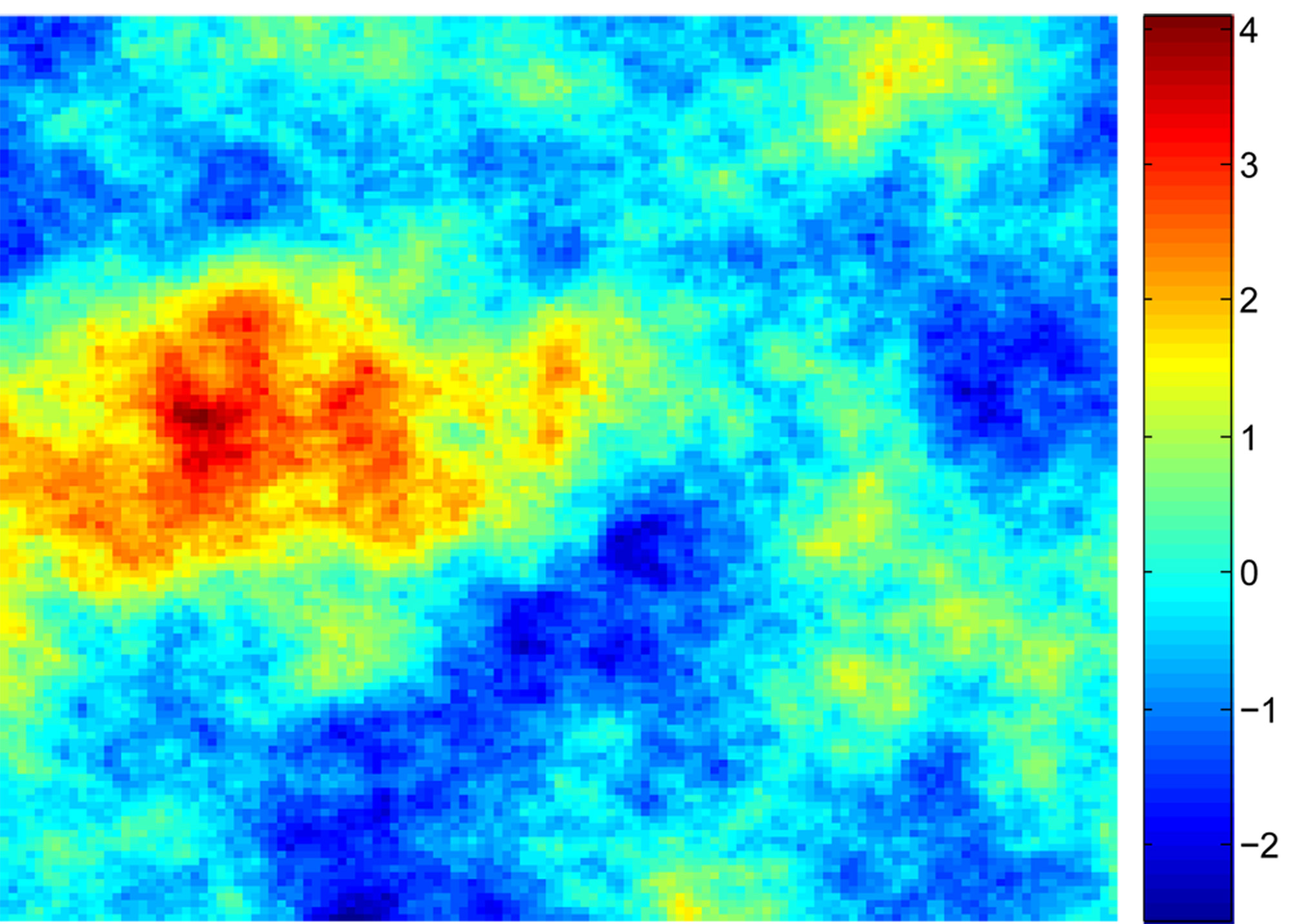}
  \caption{$256\times256$ Gaussian random field with covariance parameter $\theta=50.$   }
\end{figure}

\begin{table}[!h] \label{Table 7}
\caption{Condition numbers (iteration numbers) of Schwarz methods on Example 4.3 with $h=1/128,$ $H=8h,$ $\delta=2h,$ and $\eta=4$.}
\begin{center}
\begin{tabular} {lllllll}
  \hline

  \noalign{\smallskip}

  & \multicolumn{2}{c}{Discontinuous Galerkin }&\multicolumn{1}{c}{Continuous Galerkin}\\
 & \multicolumn{1}{c}{Nonoverlapping}&\multicolumn{1}{c}{Overlapping}&\multicolumn{1}{c}{Overlapping}\\

  \noalign{\smallskip}

${\rm High}\,\, {\rm Contrast}$\,\,\, &${\rm MS,OSC}$   &${\rm MS,OSC}$   &$~~~~~~{\rm MS,OSC}$     \\
\hline\noalign{\smallskip}
8.55e+3     & $30.72~(52)$         &~\,7.20~(16)       &~~~~~~\,~6.65~(15)       \\
1.75e+5     & $32.68~(57)$         &~\,7.85~(18)        &~~~~~~\,~7.12~(16)       \\
7.32e+7     & $36.40~(67)$         &~\,9.86~(21)         &~~~~~~\,~8.45~(19)     \\
1.49e+9     & $37.90~(73)$         &11.27~(22)       &~~~~~~\,~9.38~(20)     \\
6.02e+11    & $45.94~(86)$         &14.77~(25)    &~~~~~~11.73~(22)    \\
1.28e+13    & $51.61~(96)$         &16.80~(26)    &~~~~~~13.28~(25)       \\
\hline\noalign{\smallskip}
  \noalign{\smallskip}
\end{tabular}
\end{center}
\end{table}

\begin{exm}
{\rm
[Gaussian random field] In this example, we test our method on a more realistic model.
The coefficient $\alpha$ is a realisation of a log-normal random field (see Figure 5), i.e., $\log\alpha(x)$ is
a realisation of a homogeneous isotropic Gaussian random field with spherical covariance function, mean 0.
This is a commonly studied model in the multiscale area in many literatures \cite{I.P.R.,C.G.S.S.}.
There are some evidences from field data that this gives a reasonable
presentation of reality in certain cases \cite{Gelhar,H.K.}.
There are many good ways to generate such random fields, we simply follow
the way in \cite{K.K.} which used FFT (Fast Fourier Transformation) method \cite{FR.DM.,R.G.S.W.}.
The spherical covariance function has a parameter $\theta$, a bigger $\theta$ increases the correlation
of the random field.\\
\indent In the numerical experiments, we compared the condition number
estimates (iteration numbers) of both the nonoverlapping and overlapping domain decomposition method
with the method \cite{I.P.R.}.
}
\end{exm}

\section{Conclusions}

In this paper, we present two two level additive Schwarz domain decomposition methods
for the multiscale second order elliptic problem.
The work here is an extension of related works, c.f., \cite{D.S.,I.P.R.}, to a discontinuous Galerkin formulation, c.f., \cite{Z.X.S.}.
For the nonoverlapping case,
we show that if the conductivity field do not cross the subdomain boundaries, our nonoverlapping method is robust.
However, in some cases, when the conductivity field cross the subdomain boundaries, we show in Theorem \ref{thm:nonoverlapping}
that our condition number estimate will be dependent on $W_e$, i.e., the weighted average of the
coefficient over the edge $e.$  It is now the topic of further investigation to see if we can get rid of this dependence
on $W_e$. For the overlapping method, as seen in Theorem
\ref{thm:overlapping}, we show that our method is robust under Assumption 3.1. We allow the discontinuity of the coefficients across the coarse grid
boundaries.\\

~~\\

{\bf Acknowledegments} \,\,The authors thank Professor Maksymilian Dryja for a fruitful discussion on the paper and, in particular, for his valuable comments on the discontinnuous Galerkin method.

% ----------------------------------------------------------------------
\renewcommand{\refname}{\centerline{\small REFERENCES}}

\end{document}